\documentclass[11pt]{article}

\usepackage[latin1]{inputenc}
\usepackage{amssymb,amsmath,amsthm}
\usepackage{mathrsfs}
\usepackage{graphicx}

  \newcommand{\lab}[1]{\label{#1}}

\if00  
\newcommand{\pic}
{
\includegraphics[height=10cm]{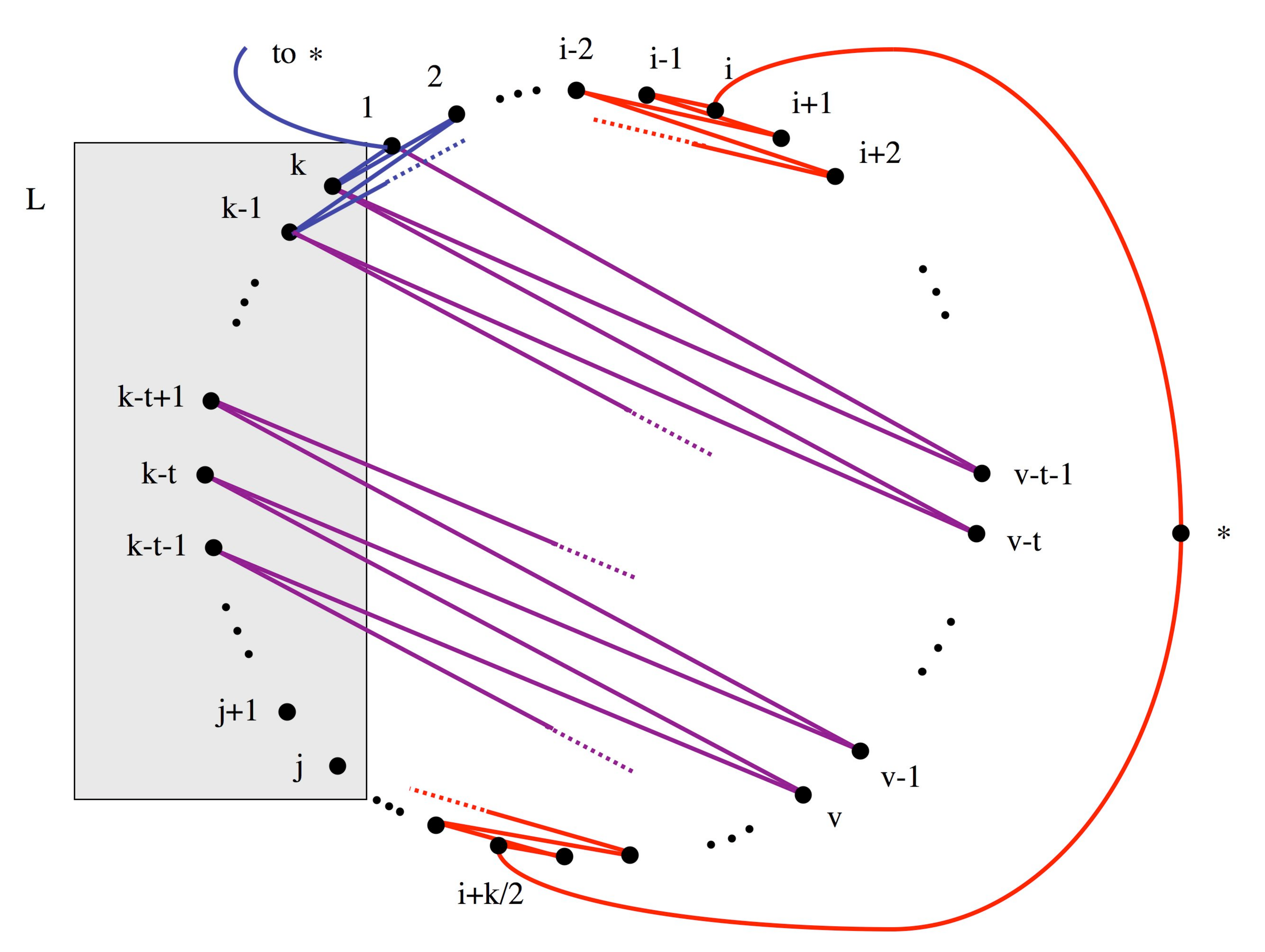}
\caption{\em  Parts of a hamiltonian decomposition. The colour $i$ is shown as red,   colour 1 is blue, and colour $v+k-t$ is purple.}
}
\else
\newcommand{\pic}
{
\includegraphics[height=10cm]{fact.jpg}
\caption{\em  Parts of a hamiltonian decomposition. The colour $i$ is shown as a dashed line,   colour 1 is dotted, and colour $v+k-t$ is solid.}
}
\fi

\if01
\renewcommand{\pic}
{
\caption{\em  Parts of a hamiltonian decomposition. The colour $i$ is shown as a dashed line,   colour 1 is dotted, and colour $v+k-t$ is solid.}
}
\fi

\addtolength{\oddsidemargin}{-.5in}
\addtolength{\textwidth}{1in}
\addtolength{\topmargin}{-.6in}
\addtolength{\textheight}{1.2in}

\theoremstyle{plain}
\newtheorem{thm}{Theorem}
\newtheorem{lem}[thm]{Lemma}

\theoremstyle{definition}
\theoremstyle{remark}

\newtheorem{case}{Case}

\newcommand{\real}{\ensuremath {\mathbb R} }

\newcommand{\mbf}[1] {\text{\boldmath$#1$}}
\newcommand{\remove}[1] {}

\newcommand{\st}{\;:\;}

\newcommand{\ex} {{\bf E}}
\newcommand{\pr} {{\bf Pr}}

\newcommand{\cB} {\ensuremath{\mathcal B}}
\newcommand{\cC} {\ensuremath{\mathcal C}}
\newcommand{\cD} {\ensuremath{\mathcal D}}

\newcommand{\cQ} {\ensuremath{\mathcal Q}}
\newcommand{\cR} {\ensuremath{\mathcal R}}
\newcommand{\cS} {\ensuremath{\mathcal S}}

\newcommand{\sG} {\ensuremath{\mathscr G}}
\newcommand{\bX} {\ensuremath{\mbf X}}

\newcommand{\US}{[0,1]^2}
\newcommand{\N}{\{1,\ldots,n\}}
\newcommand{\dist}{d}

\newcommand{\pip}{\pi_p}

\newcommand{\GXr}{\sG(\bX;r)}
\newcommand{\GXrl}{\sG(\bX;r_l)}
\newcommand{\GXrk}{\sG(\bX;r_k)}
\newcommand{\GXru}{\sG(\bX;r_u)}
\newcommand{\GXrp}{\sG(\bX;r')}

\newcommand{\GC}{\sG_\cC}

\newcommand{\GD}{\GC[\cD]}

\newcommand{\GB}{\GC[\cB]}

\newcommand{\BNE}{B_{^\nearrow}}
\newcommand{\BNW}{B_{^\nwarrow}}
\newcommand{\BSE}{B_{^\searrow}}
\newcommand{\BSW}{B_{^\swarrow}}


\title{Disjoint Hamilton cycles in the random geometric graph}
\author{
Xavier P\' erez-Gim\' enez\thanks{Partially supported by the Province of Ontario under the Post-Doctoral Fellowship (PDF) Program.}
\and
Nicholas Wormald\thanks{Supported by the  Canada Research Chairs Program and NSERC.}
}
\date{
{\small Department of Combinatorics and Optimization} \\
{\small University of Waterloo}\\
{\small Waterloo ON, Canada}\\
\smallskip
{\small {\tt \{xperez, nwormald\}@uwaterloo.ca }}
\bigskip
\\
July 26, 2009}

\begin{document}
\maketitle
\begin{abstract}
We prove a conjecture of Penrose about the standard random geometric graph process, in which $n$ vertices are placed at random on the unit square and edges are sequentially added in increasing order of lengths taken in the $\ell_p$ norm. We show that the first edge that makes the random geometric graph Hamiltonian is a.a.s.\ exactly the same one that gives $2$-connectivity. We also extend this result to arbitrary connectivity, by proving that the first edge in the process that creates a $k$-connected graph coincides a.a.s.\ with the first edge that causes the graph to contain $k/2$ pairwise edge-disjoint Hamilton cycles (for even $k$), or $(k-1)/2$ Hamilton cycles plus one perfect matching, all of them pairwise edge-disjoint (for odd $k$).
\end{abstract}
\section{Introduction}

Many authors have studied the evolution of the random geometric graph on $n$ labelled vertices placed independently and \emph{uniformly at random} (u.a.r.) on the unit square $\US$, in which edges are added in increasing order of length (see e.g.~\cite{Pe03}). Penrose~\cite{Pe99} proved that the first added edge that makes the graph have minimum degree $k$ is a.a.a.\ the first one that makes it $k$-connected. He also conjectured that on the evolution of the random geometric graph $2$-connectivity occurs a.a.s.\ precisely when the first Hamilton cycle is created. As a first step towards proving that conjecture, D\'\i az, Mitsche and the first author showed in~\cite{DiMiPe07} that the property of being Hamiltonian has a sharp threshold at $r\sim\sqrt{\log n/(\pip n)}$ (for a constant $\pi$ depending on the $\ell_p$-norm used), which coincides asymptotically with the threshold for $k$-connectivity for any constant $k$. In this paper we prove a result which, as a special case, establishes Penrose's conjecture.  

Independently and simultaneously with us obtaining our results, Penrose's conjecture was proved in the manuscripts by Balogh, Bollob\' as and Walters~\cite{BaBoWa} and by
Krivelevich and M\" uller~\cite{KrMu}. The arguments in these papers and ours are based on the ideas in~\cite{DiMiPe07}. 

Now consider the evolution of the random graph $G$ on $n$ labelled vertices, in which edges are added one by one. Bollob\' as and Frieze showed in~\cite{BoFr85} that \emph{asymptotically almost surely} as soon as $G$ has minimum degree $k$, it also contains $\lfloor k/2\rfloor$ edge-disjoint Hamilton cycles plus an additional edge disjoint perfect matching if $k$ is odd, where $k$ is any constant positive integer. (Here, asymptotically almost surely (a.a.s.) denotes with probability tending to $1$ as $n\to\infty$.) In particular, a.a.s.\ $G$ becomes $k$ connected as soon as the last vertex of degree less than $k$ disappears.

Our main result in this paper, conjectured by Krivelevich and M\" uller~\cite{KrMu}, is that the analogue of Bollob\' as and Frieze's result holds for the random geometric graph.  That is, we show that, in the evolution of the random geometric graph, a.a.s.\ as soon as the graph becomes $k$-connected, it immediately contains $\lfloor k/2\rfloor$ edge-disjoint Hamilton cycles plus one additional perfect matching if $k$ is odd. To ensure that the Hamilton cycles are edge-disjoint was a significant obstacle. To overcome it seems to require a deterministic result that apparently does not readily follow from other known results. This feature of our argument is not often found in proofs of properties of random structures.

Let $\bX=(X_1,\ldots,X_n)$ be a random vector, where each $X_i$ is a point in $\US$ chosen independently with uniform distribution.
Given $\bX$ and a radius $r=r(n)\ge0$, we define the {\em random geometric graph} $\GXr$ as follows: the vertex set of $\GXr$ is $\N$ and there is an edge joining $i$ and $j$ whenever $\Vert X_i-X_j\Vert_p\le r$. Here $\Vert\cdot\Vert_p$ denotes the standard $\ell_p$ norm, for some fixed $p\in[1,+\infty]$. Unless otherwise stated, all distances in $\US$ are measured according to the $\ell_p$ norm (i.e. $\dist(X,Y)=\Vert X-Y\Vert_p$). Let $\pip$ be the area of the unit $\ell_p$-ball (e.g.\ $\pi_2=\pi$, and $2\le\pip\le4$ for all $1\le p\le\infty$).

A continuous-time random graph process $\big(\GXr\big)_{0\le r\le1}$ is defined in a natural way, by first choosing the random set of points $\bX$ and then adding edges one by one as we increase the radius $r$ from $0$ to $\Vert(1,1)\Vert_p$.

\begin{thm}\lab{t:main} Consider the random graph process $\big(\GXr\big)_{0\le r\le1}$ for any $\ell_p$-normed metric on $\US$, and let $k$ be a fixed positive integer.
\begin{description}
\item{(i)} For even $k\ge 2$, a.a.s.\ the minimum radius $r$ at which the graph $\GXr$ is $k$-connected is equal to the minimum radius at which it has $k/2$ edge-disjoint Hamilton cycles.
\item{(ii)} For odd $k\ge 1$, a.a.s.\ the minimum radius $r$ at which the graph $\GXr$ is k-connected is equal to the minimum radius at which it has $(k-1)/2$ Hamilton cycles and one perfect matching, all of them pairwise edge-disjoint. (Here asymptotics are restricted to even $n$.) 
\end{description}
\end{thm}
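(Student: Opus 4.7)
Write $\rho_k$ for the minimum radius at which $\GXr$ is $k$-connected and $\rho_k^H$ for the minimum radius at which $\GXr$ contains $\lfloor k/2\rfloor$ edge-disjoint Hamilton cycles (together with an edge-disjoint perfect matching if $k$ is odd). Any such Hamiltonian structure forces minimum degree at least $k$, so by Penrose's theorem~\cite{Pe99} one has $\rho_k \le \rho_k^H$ a.a.s., and the content of the theorem is the matching inequality $\rho_k^H \le \rho_k$ a.a.s.

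The plan is to work throughout in a narrow window around the Hamiltonicity threshold $r^* = \sqrt{\log n/(\pip n)}$. By the sharp-threshold result of~\cite{DiMiPe07} together with~\cite{Pe99}, one can restrict attention to outcomes in which $\rho_k \in [r_l, r_u]$ for some $r_l < r_u$ with $r_u/r_l = 1+o(1)$. Standard first- and second-moment computations at a radius $r$ in this window give a detailed a.a.s.\ structural picture of $\sG(\bX;r)$: after tiling $\US$ by a grid of cells of side $\Theta(r^*)$, every cell contains $\Theta(\log n)$ points and vertex counts in any $\Theta(r^*)$-ball are sharply concentrated; the set $L$ of ``low-degree'' vertices---those of degree less than a large constant $C=C(k)$---has bounded size; any two members of $L$ are at geometric distance $\Omega(r^*)$; and near each $v \in L$ the graph looks typical, in the sense that its non-$L$ neighbourhood is dense and well-connected.

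The heart of the argument is a deterministic construction of the required decomposition in any graph satisfying this structural picture. In the bulk far from $L$, I would extend the single-Hamilton-cycle snake construction of~\cite{DiMiPe07} by exploiting the $\Theta(\log n)$ points in each cell as many parallel ``tracks'', producing simultaneously a collection of many edge-disjoint spanning Hamilton subgraphs---far more than the $\lfloor k/2\rfloor$ cycles needed. The main obstacle, and where the authors signal that a genuinely deterministic ingredient is required, is the handling of each low-degree vertex $v \in L$: the $\deg(v) \ge k$ edges at $v$ must be partitioned, without reuse, into exactly two incidences per cycle and (for odd $k$) one further edge for the matching. I would formalise this as a local-surgery lemma which, given a Hamilton decomposition of the bulk together with $v$ and its incident edges, re-routes a bounded number of edges per colour class so as to absorb $v$ while preserving Hamiltonicity of every colour and edge-disjointness across colours. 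Since the vertices of $L$ are pairwise far apart, these surgeries act in disjoint regions and do not interfere, so iterating over $L$ yields the required decomposition of $\sG(\bX;\rho_k)$. Proving this local-surgery lemma---simultaneously handling the parity cases, preserving every colour class as a Hamilton cycle, and avoiding any edge reuse at $v$---is the step I expect to be the main technical obstacle.
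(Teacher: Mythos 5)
Your structural picture of $\GXr$ at the relevant radius is false, and this is not a repairable detail. At $\pip n r^2=\log n+(k-1)\log\log n+O(1)$ there are a.a.s.\ many empty and near-empty cells of side $\Theta(r)$ (indeed vertices of degree exactly $k$ exist at the hitting radius, so some $r$-balls contain only $k+1$ points), so the claims that ``every cell contains $\Theta(\log n)$ points'' and that counts in $\Theta(r^*)$-balls are concentrated cannot hold; any correct argument has to cope with sparse and empty regions, which is why the paper classifies cells as dense/sparse/empty and must treat ``bad'' components not adjacent to the giant dense region. Worse, your two key claims about the low-degree set $L$ fail: for a large constant $C>k$ the expected number of vertices of degree $d\le C$ is of order $\log^{\,d-k+1}n$ (so $|L|$ is polylogarithmic, not bounded), and a direct first-moment computation shows that the expected number of pairs (indeed clusters of up to $k$ mutually adjacent vertices) of degree-$O(1)$ vertices within distance $o(r)$ of each other grows like a power of $\log n$, so they cannot be excluded. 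The only separation statement that survives is the much weaker Lemma~\ref{l:extradeg}: a tight cluster of degree-$\ge k$ vertices cannot consist entirely of vertices of degree exactly $k$. Hence the genuine exceptional structures are not isolated single vertices but cliques $J$ of up to $k$ vertices whose joint outside neighbourhood is barely larger than what $k$-connectivity forces.

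That configuration is precisely where your sketch defers everything to an unproven ``local-surgery lemma,'' and that lemma is the main deterministic content of the problem, not a routine step: with $|J|=j\le k$ and each vertex of $J$ having only about $\ell=k-j+1$ boundary neighbours, one must pack $\lfloor k/2\rfloor$ edge-disjoint linear forests (plus a matching for odd $k$) into the clique on $J$ together with the bipartite boundary graph so that every vertex of $J$ has degree $2$ in every forest, with no edge reused; the counting is tight, a naive greedy rerouting can get stuck, and the paper needs both the apex vertex supplied by Lemma~\ref{l:extradeg} and the joint-neighbourhood condition from $k$-connectivity to make its careful greedy procedure (Lemma~\ref{l:colouring}) work. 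Your proposal also does not explain how these local gadgets are attached to the global cycles (the paper routes them through rings of dense cells, Lemma~\ref{l:ring}, around each bad component), how the ``bulk'' construction survives sparse and empty cells (the paper glues Hamilton decompositions of dense cells, Lemma~\ref{l:decomp}, along an Eulerian circuit of a spanning tree of the dense-cell graph and then absorbs sparse cells via spare edges), or how the odd-$k$ match colour is completed to a perfect matching (a separate parity argument with ``gates'' is needed). As written, the proposal assumes away the central difficulty and rests on a.a.s.\ properties that do not hold.
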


To show that sets of pairwise edge-disjoint Hamilton cycles can be made to pass through certain ``bottlenecks" we will need a deterministic result about packing paths in graphs.
A {\em linear forest} is a forest all of whose components are paths. We use $d_G(v)$ and $N_G(v)$ to denote the degree and set of neighbours (respectively) of a vertex $v$ in  a graph $G$.
\begin{lem}\lab{l:colouring}
Assume $k\ge 1$, $1\le j\le k$  and $\ell=k-j+1$.
Let $G$ be a graph with vertex set $J\cup B$ with $|J|=j$ consisting of a clique on vertex set $J$  together with a bipartite graph $H$ with parts $J$ and $B$, such that
\begin{description}
 	\item{(i)} $d_H(v)\ge  \ell$ for  each $v\in J$, and there exists a special vertex we call the {\em apex} which has degree at least $\ell+1$   in $H$;
 	\item{(ii)} for each pair of distinct vertices $v,v'\in J$, $|N_H(\{v,v'\})\setminus\{v,v'\} |\ge\ell+1$.
\end{description}
Then $G$ contains a packing of $\lfloor k/2\rfloor $ pairwise edge-disjoint linear forests, and additionally a matching if $k$ is odd, which contain all edges in the clique with vertex set $J$, and such that each vertex in $J$ has degree 2 in each forest and (for odd $k$) degree 1 in the matching.
\end{lem}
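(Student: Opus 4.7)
My approach is to construct the desired packing in two stages: first, I distribute the clique edges among the $m := \lfloor k/2 \rfloor$ forest parts (and the matching part, if $k$ is odd); then I extend each part with edges from $H$ so that every vertex of $J$ attains the required degree.

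For the first stage, I partition $E(K_J)$ into subgraphs $S_1,\dots,S_m$, and if $k$ is odd an additional $S_M$, such that each $S_i$ is a linear forest on $J$ with maximum degree $2$ and $S_M$ has maximum degree $1$. Classical decompositions of $K_j$ yield this: when $j$ is even, I take a decomposition of $K_j$ into $j/2$ edge-disjoint Hamilton paths (Walecki) and place each one into a separate $S_i$; when $j$ is odd, I decompose $K_j$ into $(j-1)/2$ Hamilton cycles, remove one edge from each to obtain a Hamilton path (put into $S_i$), and place the extracted edges into an otherwise-empty $S_{i'}$ or into $S_M$. Since $k\ge j$ gives $m\ge\lfloor j/2\rfloor$, enough pieces are available.

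In the second stage, I extend each $S_i$ to a linear forest $F_i$ of degree $2$ at every $v\in J$ by attaching $H$-edges at vertices of deficit, and similarly extend $S_M$ so each $v\in J$ has degree $1$. The number of $H$-edges appended at each $v$ across all parts equals $k-(j-1)=\ell$, which matches the lower bound $d_H(v)\ge\ell$ from condition~(i).

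The main obstacle is to choose these $H$-extensions so that every $F_i$ remains acyclic and so that all chosen edges are globally distinct across parts. A cycle in $F_i$ can arise only when the two endpoints of a single path-component of $S_i$ are extended to a common $B$-vertex (or when an isolated $J$-vertex of $S_i$ is assigned two copies of the same $H$-edge). Condition~(ii), stating that any pair $v,v'\in J$ has at least $\ell+1$ combined $H$-neighbours in $B$, provides the slack needed to separate such conflicts: whenever two $J$-vertices lie in a common path-component, enough distinct $B$-endpoints are available to avoid merging them into a cycle. The apex's extra $H$-neighbour is the decisive resource in the most constrained configurations, when the choices at all other $J$-vertices have been forced by prior assignments. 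I plan to formalise the extension via a greedy iterative assignment with controlled backtracking, equivalently captured by a Hall-type matching argument on an auxiliary bipartite graph whose marriage inequalities follow from (i) and (ii).
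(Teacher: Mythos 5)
There is a genuine gap: your Stage~2 is the entire content of the lemma, and you do not prove it. Because a non-apex vertex $v\in J$ may have $d_H(v)=\ell$ exactly, the $\ell$ edges of $H$ at $v$ must \emph{all} be used, so there is no slack at individual vertices; the only global slack in the hypotheses is the apex's one extra edge and the ``$\ell+1$ joint neighbours'' condition (ii). Saying that the extension can be ``formalised via a greedy iterative assignment with controlled backtracking, equivalently captured by a Hall-type matching argument whose marriage inequalities follow from (i) and (ii)'' is a plan, not an argument: no auxiliary bipartite graph is defined and no Hall condition is verified, and it is far from clear that any simple Hall-type statement is true here. (The paper's proof needs an intricately ordered greedy --- vertices treated in a fixed order with the apex last, missing colours treated in a specific order, plus an extra rule forbidding a colour class from becoming connected unless forced --- and a delicate tightness analysis in which the apex and condition (ii) are used to derive a contradiction; the authors even state they cannot prove the lemma without the apex hypothesis, which signals how little room a generic greedy has.) Moreover, your Stage~1 choice of an arbitrary Walecki decomposition of $K_j$ gives no control over \emph{which} parts have deficits at which vertices; the paper instead embeds $J$ into $K_{k+1}$ together with a dummy set $L$ and inherits the deficits from the rotational colouring, and it is precisely this structure of the ``missing colours'' that makes its greedy analysable.

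A concrete error compounds the gap: your description of how cycles can arise is wrong. You claim a cycle in $F_i$ can only come from the two endpoints of a single path-component of $S_i$ being joined to a common $B$-vertex. But $B$-vertices are allowed (and, given the tight degree budget, in general must be allowed) to attain degree $2$ within a part, so a cycle can be a long alternating cycle passing through several path-components of $S_i$ and several degree-$2$ vertices of $B$, with no single component's two ends sharing a $B$-neighbour. Hence avoiding the ``local'' conflicts you describe does not keep $F_i$ acyclic, and condition (ii) as you invoke it does not address these longer cycles. If instead you forbid degree $2$ at $B$-vertices within a part, acyclicity becomes automatic, but then you must show that the forced multiset of $H$-edges at each $J$-vertex can be scheduled into the $\lfloor k/2\rfloor$ parts (plus the matching) with every $B$-vertex used at most once per part --- a global feasibility statement that again does not follow from (i) and (ii) by the counting you give, and is not established in the proposal.
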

Although we do not require it, we conjecture that Lemma~\ref{l:colouring} is still valid if one removes the requirement of existence of an apex vertex (i.e.\ simply require $d_H(v)\ge \ell$ for each $v\in J$ for the first condition of the lemma).

The next section contains the basic geometric definitions and probabilistic statements required in the argument, including proofs that several properties hold a.a.s. Next, we prove Lemma~\ref{l:colouring} in Section~\ref{s:colouring}. Finally, in Section~\ref{s:build}, we prove the main theorem, by supplying the required construction of  Hamilton cycles (and perfect matching)  in the random geometric graph deterministically, assuming the properties that were shown to hold a.a.s.


\section{Asymptotically almost sure properties}\lab{s:assprop}

Here, we define some properties of the random geometric graph that hold a.a.s.\ and that will turn out to be sufficient for our construction of disjoint Hamilton cycles.

Henceforth we assume that the points in $\bX$ are in general position---i.e.\ they are are all different, no three of them are collinear, and all distances between pairs of points are strictly different---since this holds with probability $1$.
\begin{lem}\lab{l:extradeg}
For any small enough constant $\eta>0$ and any $r$ such that $\pip nr^2=\log n+(k-1)\log\log n + O(1)$, the random geometric graph $\GXr$ a.a.s.\ satisfies the following property.
Every set $J$ of vertices of size $2\le|J|\le k$ in which each vertex has degree at least $k$ and such that $\max_{u,v\in J}\{\dist(X_u,X_v)\}\le \eta r$ contains some vertex of degree at least $k+1$.
\end{lem}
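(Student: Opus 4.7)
My plan is a first-moment computation. For each $j\in\{2,\ldots,k\}$, let $Z_j$ count the subsets $J\subseteq\N$ with $|J|=j$ that violate the conclusion of the lemma: given the hypothesis that every vertex of $J$ has degree at least $k$, these are exactly the sets $J$ with $\max_{u,v\in J}\dist(X_u,X_v)\le\eta r$ in which every vertex has degree exactly $k$. By Markov's inequality and a union bound over the $k-1$ admissible values of $j$, it suffices to show $\ex[Z_j]=o(1)$ for each such $j$.

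To bound $\ex[Z_j]$, fix $J=\{v_1,\ldots,v_j\}$ and condition on $X_{v_1},\ldots,X_{v_j}$. Because these positions lie pairwise within $\eta r<r$, each $v_i$ already has $j-1$ neighbors inside $J$, so $d(v_i)=k$ is equivalent to exactly $\ell=k-j+1$ of the other $n-j$ points lying in $B(X_{v_i},r)$. Partitioning $\US$ by the Venn diagram of the $j$ balls into regions $R_S=\bigcap_{v\in S}B(X_v,r)\cap\bigcap_{v\in J\setminus S}(\US\setminus B(X_v,r))$ for $S\subseteq J$, and writing $n_S$ for the number of external points in $R_S$, the joint event ``every $v_i$ has degree $k$'' decomposes as a disjoint union over integer configurations $(n_S)$ satisfying $\sum_{S\ni v_i}n_S=\ell$ for every $i$. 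The standard Poisson approximation assigns to each configuration a probability of order $e^{-\mu_U}\prod_S\lambda_S^{n_S}/n_S!$, where $\lambda_S=(n-j)\,\mathrm{area}(R_S)$ and $\mu_U=(n-j)\,\mathrm{area}(U)$ with $U=\bigcup_i B(X_{v_i},r)$. The crucial geometric input is the lower bound $\mathrm{area}(U)\ge \pi_p r^2+c\rho_{\max}r$ (for a norm-dependent constant $c>0$, with $\rho_{\max}$ the cluster diameter), which produces a damping factor $e^{-cn\rho_{\max}r}$ in $e^{-\mu_U}$ beyond the baseline $e^{-\mu}=O(n^{-1}(\log n)^{-(k-1)})$.

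Finally, I would integrate over cluster positions. Parameterizing by $X_{v_1}$ (Lebesgue on $\US$) and the offsets of $X_{v_i}$ from $X_{v_1}$ for $i\ge 2$, the area element in 2-dimensional polar-like coordinates contributes $\Theta(\rho_i)\,d\rho_i$ per radial variable, and the damping factor $e^{-cn\rho_{\max}r}$ (together with polynomial weights in $\rho_i$ arising from the $\lambda_S$) effectively confines the integration to $\rho_i=O(1/(nr))$. Rescaling $u_i=cn\rho_ir$ converts each radial integral to $O(1)\cdot(nr)^{-(A+2)}$ for some exponent $A\ge 0$ depending on the configuration. Combining these with $\binom{n}{j}=O(n^j)$, the Poisson weights $\prod_S\lambda_S^{n_S}=O((\log n)^{j\ell})$, and the baseline $e^{-\mu}$ yields $\ex[Z_j]=o(1)$ (with quantitative decay of order $(\log n)^{-c}$ or $n^{-c}$, for some $c>0$ depending on $j$ and $k$). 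The main obstacle is the combinatorial book-keeping over valid Venn-diagram configurations $(n_S)$: the dominant one transitions between ``all $\ell$ external neighbors in the full intersection $R_J$'' (for very concentrated clusters, with $\rho_{\max}=O(1/(nr))$) and ``$\ell$ private external neighbors in each singleton $R_{\{v_i\}}$'' (for $\rho_{\max}$ close to $\eta r$), with intermediate cases interpolating. Making the argument uniform---ensuring that the $(\log n)^{O(1)}$ growth from the Poisson factors is always beaten by the $(nr)^{-O(1)}$ suppression from radial integration---is where the proof requires the most care.
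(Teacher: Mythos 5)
Your overall strategy is the same as the paper's: a first-moment bound over bad clusters in which every vertex has degree exactly $k$, with the decisive input being the lower bound $\mathrm{area}\bigl(\bigcup_i B(X_{v_i},r)\bigr)\ge \pip r^2+c\rho_{\max}r$, whose damping factor $e^{-cn\rho_{\max}r}$ is then exploited through the radial rescaling $u=cn\rho_{\max}r$. (The paper streamlines the bookkeeping by conditioning only on the diameter pair $v_1,v_2$: it requires $v_1$ to have exactly $k$ neighbours and counts the $s$ private neighbours of $v_2$ in the lune, which avoids your sum over Venn-diagram configurations altogether.)

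However, the combination step as you state it does not close. With $\pip nr^2=\log n+(k-1)\log\log n+O(1)$ one has $(nr)^2=\Theta(n\log n)$, so the ingredients you list give
\[
\ex [Z_j]\;\lesssim\; n^j\cdot(\log n)^{j\ell}\cdot\frac{1}{n\log^{k-1}n}\cdot (nr)^{-2(j-1)}\;=\;(\log n)^{\,j\ell-(k-1)-(j-1)},
\]
and already for $j=2$ the exponent is $2(k-1)-(k-1)-1=k-2\ge 0$: the bound $\prod_S\lambda_S^{n_S}=O((\log n)^{j\ell})$ is too lossy, and the claimed $\ex[Z_j]=o(1)$ does not follow from what you wrote. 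The missing, essential point is that only the full intersection $R_J$ has area $\Theta(r^2)$; every proper region $R_S$ with $\emptyset\ne S\subsetneq J$ lies inside some $B(X_v,r)\setminus B(X_{v'},r)$ and hence has area $O(\rho_{\max}r)$, so each external point placed there contributes a factor $O(n\rho_{\max}r)$ which, after the rescaling, is absorbed by $e^{-cn\rho_{\max}r}$ (it only changes the constant of the Gamma-type integral), not a factor of $\log n$ --- and you cannot both extract these $\rho$-weights to boost the radial suppression and simultaneously charge the same $\lambda_S$ as $O(\log n)$ in a $(\log n)^{j\ell}$ bound. Moreover, at most $a\le\ell=k-j+1$ external points can lie in $R_J$, since each of them counts toward every $v_i$'s quota of $\ell$ external neighbours; so the logarithmic contribution is $(\log n)^a\le(\log n)^{\ell}$, giving exponent at most $\ell-(k-1)-(j-1)=3-2j\le-1$, which is exactly the paper's $O(\log^{3-2j}n)$. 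You flag this configuration bookkeeping as the place ``where the proof requires the most care'', but it is not optional fine-tuning: without it the computation you wrote down fails, so this accounting must actually be carried out (or bypassed entirely via the paper's two-point reduction).
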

\begin{proof}
Suppose that some set $J\subset\N$ with $|J|=j$ causes the property to fail. Let $v_1$ and $v_2$ be two vertices of $J$ such that $X_{v_1}$ and $X_{v_2}$ realise the diameter of $\{X_v\st v\in J\}$. Enumerate the rest of $J$ as $v_3,\ldots, v_j$. We may assume that $v_1$ has degree exactly $k$; let  $v_{j+1},\ldots,v_{k+1}$ be its other neighbours. Letting $s$ denote the number of neighbours of $v_2$ which are not neighbours of $v_1$, we have an example of one of the following cofigurations.  A {\em bad} configuration is an ordered tuple $J$ of $k+s+1$ vertices $v_1,\ldots,v_{k+s+1}$ in $\GXr$ with the following properties: vertices $v_2,\ldots,v_{k+1}$ are the only $k$ neighbours of $v_1$ in $\GXr$; $\dist(v_1,v_2)\le \eta r$; $\dist(v_1,v_i)\le\dist(v_1,v_2)$ for $2< i\le j$; vertex $v_2$ has exactly $s$ neighbours, namely $v_{k+2},\ldots,v_{k+s+1}$, which are not neighbours of $v_1$. Let $T$ be the number of   bad configurations, for any fixed $s$. To prove the statement it suffices to show that $\ex T=o(1)$, regardless of the choice of $s$.

Assume we are given the position of $v_1$ and $\rho=\dist(v_1,v_2)$. The probability that $\dist(v_1,v_i)\le\rho$ for $2<i\le j$ is $(\pip\rho^2)^{j-2} = O(\rho^{2j-4})$. The probability that $\dist(v_1,v_i)\le r$ for $j+1\le i\le k+1$ is $(\pip r^2)^{k+1-j}=O(r^{2(k+1-j)})$. The probability that vertices $v_{k+2},\ldots,v_{k+s+1}$ are neighbours of $v_2$ but not $v_1$ is $(c\rho r)^s=O((\rho r)^s)$, where $|c-2|\le\epsilon$ for some small $\epsilon>0$ independent of $\rho$ (we can achieve that by taking $\eta$ small enough). The probability that the remaining $n-k-s-1$ vertices are not neighbours of $v_1$ or $v_2$ is
\[
(1-\pip r^2-c\rho r)^{n-k-s-1} \sim \exp(-(\pip r^2+c\rho r)n)
= O\left(\frac{e^{-c\rho rn}}{n\log^{k-1}n}\right).
\]
The probability density function of $\dist(v_1,v_2)$ is $2\pip\rho$. Putting all together,
\begin{align*}
\ex T &= O(n^{k+s+1}) \int_{0}^{\eta r} \rho^{2j-4} r^{2(k+1-j)} (\rho r)^s \frac{e^{-c\rho rn}}{n\log^{k-1}n} \rho d\rho
\\
&= O(1) \frac{(nr^2)^{k+2-2j}}{\log^{k-1}n} \int_{0}^{\eta cr^2n} x^{2j+s-3} e^{-x} dx
\\
&= O( \log^{3-2j}n) = o(1)\qedhere
\end{align*}
\end{proof}
For the following definitions, we fix $\delta>0$ to be a small enough constant and assume $r\to0$. We tessellate $\US$ into square \emph{cells} of side $\delta'r=\lceil(\delta r)^{-1}\rceil^{-1}$. (Note that $\delta'$ is not constant, but $\delta'\le\delta$ and $\delta'\to\delta$). Let $\cC$ be the set of cells, and let $\GC$ be an auxiliary graph with vertex set $\cC$ and with one edge connecting each pair of cells $c_1$ and $c_2$ iff all points in $c_1$ have distance at most $r$ from all points in $c_2$. Note that we shall use the term \emph{adjacent cells} to refer to cells which are adjacent vertices of the graph of cells $\GC$, while cells sharing a side boundary will be described as being \emph{topologically adjacent}. Let $\Delta$ be the maximum degree of $\GC$. By construction, $\Delta$ is a constant only depending on $\delta$ and the chosen $\ell_p$ norm.

We may assume that each point in $\bX$ lies strictly in the interior of a cell in the tesselation, since this happens with probability $1$.
Let $M$ be a large enough but constant positive integer (its choice will only depend on $\Delta$, thus on $\delta$, and also on $k$ and $\ell_p$). A cell in $\cC$ is {\em dense} if it contains at least $M$ points of the random set $\bX$, {\em sparse} if it contains at least one, but less than $M$, points in $\bX$, and {\em empty} if it has no points in $\bX$. Let $\cD\subseteq\cC$ be the set of dense cells. Note that $\cD\ne\emptyset$, since the total number of cells is $|\cC|=\Theta(n\log n)$, so at least one must contain $\Omega(\log n)$ points in $\bX$.

A set of cells is said to be \emph{connected} if it induces a connected subgraph of $\GC$.  (For $\delta$ small enough, this includes the situation where the union of cells is topologically connected.) The \emph{area} of a set of cells is simply the area of the corresponding union of cells. A set of cells \emph{touches} one side (or one corner) of $\US$ if it contains a cell which has some boundary on that side (or corner) of the unit square.
\begin{lem}\lab{l:P2}
For any constants $\delta>0$, $\alpha>0$ and $\lambda\in\real$ and for $r$ defined by $\pip nr^2=\log n+(k-1)\log\log n + \lambda$, the following statements hold a.a.s.
\begin{enumerate}
\item All connected sets of cells of area at least $(1+\alpha)\pip r^2$ contain some dense cell.
\item All connected sets of cells of area at least $(1+\alpha)\pip r^2/2$ touching some side of $\US$ contain some dense cell.
\item All cells contained inside a $5r\times5r$ square on each corner of $\US$ are dense.
\end{enumerate}
\end{lem}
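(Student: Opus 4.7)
The plan is to prove each of (1)--(3) by a first-moment / Chernoff argument, exploiting the bounded-degree structure of $\GC$. Throughout, recall $nr^2\sim\log n/\pip$, that $|\cC|=\Theta(n/\log n)$, and that $\GC$ has maximum degree a constant $\Delta=\Delta(\delta)$.

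For (1), set $s_0:=\lceil(1+\alpha)\pip/(\delta')^2\rceil$, the minimum number of cells needed to cover area $(1+\alpha)\pip r^2$. A standard enumeration bound for connected subgraphs of a graph of maximum degree $\Delta$ (rooted at a vertex, e.g.\ via a spanning tree and Cayley-type counting) gives at most $|\cC|C^s$ connected sets of $s$ cells, for some constant $C=C(\Delta)$. For a fixed such set $S$, the number of points of $\bX$ lying in $S$ is $\mathrm{Bin}(n,s(\delta')^2 r^2)$ with mean $\mu_S=s(\delta')^2 nr^2\sim s(\delta')^2\log n/\pip$, and $S$ contains no dense cell only if this count is less than $Ms$. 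Since $M$ is a fixed constant and $Ms/\mu_S=O(1/\log n)\to 0$, a direct Poisson-type tail estimate (bounding the first $Ms$ terms of the binomial expansion) gives
\[
\Pr\bigl[\mathrm{Bin}(n,s(\delta')^2 r^2)<Ms\bigr]\le\exp\bigl(-(1-o(1))\mu_S\bigr)=n^{-(1-o(1))(\delta')^2 s/\pip}.
\]
Because $(\delta')^2 s_0/\pip=1+\alpha$, the union-bound contribution at $s=s_0$ is $(n/\log n)\cdot O(1)\cdot n^{-(1+\alpha)+o(1)}=n^{-\alpha+o(1)}=o(1)$. For larger $s$, each additional cell multiplies by $C\cdot n^{-(\delta')^2/\pip+o(1)}=o(1)$, so the sum over $s\ge s_0$ converges geometrically.

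For (2), only sets $S$ touching a side of $\US$ are relevant, and the number of cells adjacent to a side is $O(1/r)=O(\sqrt{n/\log n})$. Repeating the argument with the smaller threshold $s_0':=\lceil(1+\alpha)\pip/(2(\delta')^2)\rceil$, the union bound at $s=s_0'$ becomes $O(\sqrt{n/\log n})\cdot n^{-(1+\alpha)/2+o(1)}=n^{-\alpha/2+o(1)}=o(1)$, and the higher-$s$ terms are again geometrically dominated.

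For (3), each of the four $5r\times 5r$ corner squares contains only $O(1/\delta^2)=O(1)$ cells. For any single cell $c$,
\[
\Pr[c\text{ is not dense}]=\Pr\bigl[\mathrm{Bin}(n,(\delta')^2 r^2)<M\bigr]=O\bigl((\log n)^{M-1}\bigr)\cdot n^{-(\delta')^2/\pip}=o(1),
\]
by estimating the lowest $M$ binomial terms, and a union bound over the $O(1)$ corner cells still gives $o(1)$. The one recurring technical point is that the usual Chernoff form $\exp(-\gamma^2\mu/2)$ is too weak when the threshold $Ms$ is much smaller than $\mu_S$; the correct exponent here is $(1-o(1))\mu_S$, and the $1+\alpha$ slack in the area hypotheses is exactly what is needed to absorb the enumeration factors $|\cC|C^s$ in (1) and $(1/r)C^s$ in (2).
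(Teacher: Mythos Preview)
Your proof is correct and follows essentially the same first-moment approach as the paper: bound the probability that a fixed connected set of $s$ cells contains fewer than $Ms$ points, then take a union bound over connected sets. The only notable difference is that the paper avoids your summation over $s\ge s_0$ by observing that any connected set of at least $s_0$ cells contains a connected subset of exactly $s_0$ cells (prune leaves of a spanning tree), so it suffices to treat the single constant value $s=s_0$; this gives $\Theta(n/\log n)$ sets directly and dispenses with the geometric tail estimate.
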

\begin{proof}
Recall that the area of each cell is $\delta'^2r^2$. Then, in order to show the first statement in the lemma, it suffices to consider all connected sets of cells with exactly $s=\lceil(1+\alpha)\pip / \delta'^2\rceil=\Theta(1)$ cells. 
Let $\cS$ be such a set of cells. The probability that $\cS$ has no dense cell is at most
\begin{eqnarray*}
\sum_{i=0}^{(M-1)s} \binom{n}{i} (s\delta'^2r^2)^i (1-s\delta'^2r^2)^{n-i}
&=& O\left(e^{-(1+\alpha)\pip r^2 n}\right) \sum_{i=0}^{(M-1)s}(r^2n)^i\\
&=& O\left( n^{-(1+\alpha)} \log^c n \right),
\end{eqnarray*}
where $c=(M-1)s-(1+\alpha)(k-1)$ is constant. To conclude the first part of the proof,  multiply the probability above by the number $\Theta(1/r^2)=\Theta(n/\log n)$ of connected sets of $s$ cells.

By a completely analogous argument, if $\cS$ has area only $\lceil(1+\alpha)\pip / \delta'^2\rceil/2$ and touches some side of $\US$, the probability that it has no dense cell is $O(n^{-(1+\alpha)/2)}\log^{c'}n$, for some constant $c'$. However, the number of such sets is only $\Theta(\sqrt{n/\log n})$.

Finally, there is a bounded number of cells inside any of the $5r\times5r$ squares on the corners, and each individual cell is dense with probability $1-o(1)$.
\end{proof}

A set of cells is \emph{small} if it can be embedded in a $16\times16$ grid of cells, and it is \emph{large} otherwise.  Consider the subgraph $\GD$ of $\GC$ induced by dense cells, and let $\cD_0$ be the set of dense cells which are not in small components of $\GD$ (we shall see that $\cD_0$ forms a unique large component in $\GD$).
Most of the trouble in our argument comes from cells which are not adjacent to any dense cell in $\cD_0$, so let $\cB=\cC\setminus(\cD_0\cup N(\cD_0))$, and call the cells in $\cB$ \emph{bad} cells. Also, let us denote components of $\GB$ as \emph{bad} components.
Note that by construction all cells in $N(\cB)\setminus\cB$ must be sparse but adjacent to some cell in $\cD_0$, while $\cB$ itself may contain both sparse and dense cells.
%
%
\begin{lem}\lab{l:nolarge}
For a small enough constant $\delta>0$, any constant $\lambda\in\real$ and $r$ defined by $\pip nr^2=\log n+(k-1)\log\log n + \lambda$, the following holds a.a.s.
\begin{enumerate}
\item All components of $\GD$ are small except for one large component formed by precisely the cells in $\cD_0$.
\item $\GB$ has only small components.
\end{enumerate}
\end{lem}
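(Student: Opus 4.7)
My plan is to derive both parts deterministically from the three properties of Lemma~\ref{l:P2}. Applied to connected components of non-dense cells, parts (1) and (2) bound every $\GC$-component of $\cC\setminus\cD$ to area less than $(1+\alpha)\pip r^2$ (and less than half that if it touches the boundary of $\US$), hence to at most $O(1/\delta^2)$ cells; meanwhile part (3) gives that each $5r\times 5r$ corner square of $\US$ is entirely dense, containing a $\Theta(1/\delta)\times\Theta(1/\delta)$ block of mutually $\GC$-adjacent cells that on its own is a large component of $\GD$.

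For part 1 I first merge the four corner components into a single large component $F$ of $\GD$, as follows. Along each side of $\US$ I construct a path in $\GD$ linking the two adjacent corners by following dense cells within a thin strip of width $\Theta(r)$ along that side, detouring slightly inward around any non-dense $\GC$-component obstructing the path; by the boundary-touching area bound of Lemma~\ref{l:P2}(2), each such obstacle has bounded spatial extent, so for $\delta$ small enough a valid detour always exists. Next I rule out any other large component $C$ of $\GD$: such a $C$ lies in the interior of $\US$, and for $\delta$ small every cell topologically adjacent to $C$ but not in $C$ must be non-dense (a topologically adjacent dense cell would be $\GC$-adjacent to $C$, hence in $C$). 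Therefore the union of cells of $C$ is surrounded in $\US$ by a ``moat'' of non-dense cells, which must contain at least one $\GC$-component winding around $C$ (by the Jordan curve theorem applied to a closed curve separating $C$ from $F$). Applying Lemma~\ref{l:P2}(1) together with an isoperimetric/plane-topology bound on regions enclosed by a thin connected set of given area, the diameter of $C$ is forced to be $O(1)$ cells and, for $\delta$ small, to fit in a $16\times 16$ block, contradicting $C$ being large. Hence $F=\cD_0$ is the unique large component of $\GD$.

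For part 2, any component $K$ of $\GB$ consists of cells not in $\cD_0$ and not $\GC$-adjacent to it. Since $\cD_0\supseteq F$ meets every side of $\US$, $K$ must be confined to a topological hole in the complement of $\cD_0$, bounded on the $\cD_0$-side by the sparse cells of $N(\cD_0)\setminus\cD_0$. The same moat/enclosure argument used for part 1 then bounds the hole's diameter to $O(1)$ cells, so $K$ fits in a $16\times 16$ block and is small. The main obstacle in this plan is the isoperimetric step inside part 1: a single $\GC$-component of non-dense cells can be topologically disconnected and geometrically long-and-thin, so extracting from its bounded area a bound on the diameter of the region it encloses requires careful attention, and $\delta$ must be chosen small enough---relative to the constant $16$ in the definition of ``small''---that this bound actually fits inside a $16\times 16$ cell block.
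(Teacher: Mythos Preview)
Your moat/isoperimetric step has a genuine gap, and the $\delta$-dependence runs the wrong way. Lemma~\ref{l:P2}(1) bounds the \emph{area} of any $\GC$-connected non-dense set by $(1+\alpha)\pip r^2$, i.e.\ by roughly $(1+\alpha)\pip/\delta'^2$ cells. A one-cell-thick moat encircling a hypothetical second large dense component $C$ can therefore be a loop of up to $\Theta(1/\delta^2)$ cells, enclosing a region whose diameter is of the same order in cells. For small $\delta$ this is vastly larger than $16$, so there is no contradiction with $C$ being large. Taking $\delta$ smaller only increases the cell-count bound, so your closing remark (choose $\delta$ small enough that the bound fits in a $16\times 16$ block) is backwards. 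The side-path construction has the same scaling issue: a boundary-touching non-dense component of area $O(r^2)$ can be one cell wide and $\Theta(1/\delta^2)$ cells tall, so ``detouring slightly inward'' may push you $\Theta(r/\delta)$ into the interior, where further obstacles await. Part~2 inherits all of this.

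The paper avoids the problem by looking at the \emph{thick} neighbourhood $N(\cS)\setminus\cS$ rather than a one-cell moat. For any large connected $\cS$ (splitting into cases by how many sides of $\US$ the set $N(\cS)$ touches) it exhibits, inside a single connected piece of $N(\cS)\setminus\cS$, explicit disjoint regions---quarter $\ell_p$-balls of radius $(1-4\delta')r$ at extremal cells of $\cS$, together with thin rectangular slabs of dimensions roughly $16\delta' r \times r$---whose total area strictly exceeds $(1+\alpha)\pip r^2$ (or half that near one side). The ``large'' hypothesis, extent $>16$ cells in some direction, is exactly what supplies the extra $\Theta(\delta')\,r^2$ of slab area needed to push past the threshold. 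Lemma~\ref{l:P2} then forces a dense cell in $N(\cS)\setminus\cS$, which is impossible when $\cS$ is a maximal dense component or a component of $\GB$. The key point is that $N(\cS)\setminus\cS$ has thickness $\Theta(r)$, i.e.\ $\Theta(1/\delta)$ cells, so its area scales with the perimeter of $\cS$ measured in units of $r$ rather than in cells; your one-cell moat loses precisely this factor and cannot trigger the area threshold of Lemma~\ref{l:P2}.
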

\begin{proof}
First, we claim that the following statements are a.a.s.\ true. (Recall that ``connected'' is defined in terms of the graph $\GC$, not topological adjacency.)
\begin{enumerate}
\item
For any large connected set of cells $\cS$ such that $N(\cS)$ does not touch all four sides of $\US$, $N(\cS)\setminus\cS$ must contain some dense cell.
\item
For any pair of connected sets of cells $\cS_1$ and $\cS_2$ not adjacent to each other (i.e.\ $\cS_2\cap N(\cS_1)=\emptyset$) and such that both $N(\cS_1)$ and $N(\cS_2)$ touch all four sides of $\US$, $N(\cS_1)\setminus\cS_1$ or $N(\cS_2)\setminus\cS_2$ must contain some dense cell.
\end{enumerate}
As an immediate consequence of this claim, by considering the maximal connected sets of dense cells, we deduce that $\GD$ must have a unique large component, consisting of all cells in $\cD_0$ (note that $\cD_0\ne\emptyset$ by statement~3 in Lemma~\ref{l:P2}). Moreover, $N(\cD_0)$ must touch all four sides of $\US$. Now suppose that $\GB$ has some large component $\cS$. By definition $N(\cS)\setminus\cS$ contains only sparse cells. Then, by the first part of the claim, $N(\cS)$ must touch the four sides of $\US$. Hence, we apply the second part of the claim to $\cS$ and $\cD_0$ to deduce that such large $\cS$ cannot exist.

It just remains to prove the initial claim.
Let $\cS$ be a connected set of cells. Observe that $\bigcup N(\cS)$ is topologically connected (and in particular $N(\cS)$ is a connected set of cells), and that the outer boundary $\gamma$ of $\bigcup N(\cS)$ is a simple closed polygonal path along the grid lines in $\US$ defined by the tessellation. If we remove from $\gamma$ the segments that coincide with some side of $\US$, each connected polygonal path that remains is called a \emph{piece} of $\gamma$. Note that $N(\cS)\setminus\cS$ need not be a connected set of cells. However all cells in $N(\cS)$ along the same piece of $\gamma$ must be contained in the same topological component of $\bigcup(N(\cS)\setminus\cS)$, and thus in the same connected component of $\GC[N(\cS)\setminus\cS]$.

The argument comprises several cases. For each case, a lower bound on the area of some connected component of $\GC[N(\cS)\setminus\cS]$ is given by finding some disjoint subsets of $\US$ of large enough area contained in the union of cells in that component. Then, Lemma~\ref{l:P2} ensures that $N(\cS)\setminus\cS$ contain at least one dense cell.

Given a cell $c$, let $\BNE(c)$ be the set of points at distance at most $(1-4\delta')r$ from the top right corner of $c$ and above and to the right of that corner. The sets $\BNW(c)$, $\BSE(c)$ and $\BSW(c)$ are defined analogously replacing (top, above, right) by  (top, above, left), (bottom, below, right) and (bottom, below, left) respectively. Note that $\BNE(c)$, $\BNW(c)$, $\BSE(c)$ and $\BSW(c)$ are disjoint and contained in $\bigcup(N(c)\setminus\{c\})$.

\setcounter{case}{0}
\begin{case}\lab{c:noside}
Let $\cS\subseteq\cC$ be a connected set of cells which is not small and such that $N(\cS)$ does not touch any side of $\US$.
Since $\cS$ is not small, assume without loss of generality that its vertical extent is greater than $16\delta'r$. Let $c_1$, $c_2$, $c_3$, $c_4$ be respectively the topmost, bottommost, leftmost and rightmost cells in $\cS$ (possibly not all different and not unique). 
Let $A_{\rightarrow}$ be any rectangle of height $16\delta'r$ and width $(1-20\delta')r$ glued to the right of $c_4$ and between the top of $c_1$ and the bottom of $c_2$. Also choose a similar rectangle $A_{\leftarrow}$ of the same dimensions glued to the left of $c_3$, and let $A_{\uparrow}$ and $A_{\downarrow}$ be rectangles of height $(1-4\delta')r$ and width $\delta'r$ placed  on top of, and below, the cells $c_1$ and $c_2$ respectively. By construction, $\BNE(c_1)$, $\BNW(c_1)$, $\BSE(c_2)$, $\BSW(c_2)$, $A_{\uparrow}$, $A_{\downarrow}$, $A_{\leftarrow}$ and $A_{\rightarrow}$ are disjoint and are contained in the same topological component of $\bigcup(N(\cS)\setminus\cS)$ (i.e. the one that touches $\gamma$), which thus has area at least
\[
\pip(1-4\delta')^2r^2  + 2 \delta'r(1-4\delta')r 
+ 32 \delta'r(1-20\delta')r \ge \pip r^2(1+\delta'/3).
\]
Hence, by Lemma~\ref{l:P2}, $N(\cS)\setminus\cS$ must contain some dense cell.
\end{case}
\begin{case}\lab{c:1side}
Let $\cS\subseteq\cC$ be a connected set of cells which is not small and such that $N(\cS)$ touches only one side of $\US$ (assume it is the bottom side). This is very similar to Case~\ref{c:noside}, so we just sketch the main differences in the argument.

If the vertical  extent of $\cS$ is greater than $16\delta'r$, then proceed as in Case~\ref{c:noside} but only consider the sets $\BNE(c_1)$, $\BNW(c_1)$, $A_{\uparrow}$, $A_{\leftarrow}$ and $A_{\rightarrow}$. Otherwise, the horizontal extent of $\cS$ must be greater than $16\delta'r$, and we consider instead the sets $\BNE(c_4)$, $\BNW(c_3)$, $A'_{\uparrow}$, $A'_{\leftarrow}$ and $A'_{\rightarrow}$. Here, $A'_{\leftarrow}$ and $A'_{\rightarrow}$ are rectangles of height $\delta'r$ and width $(1-4\delta')r$ placed to the left and right of cells $c_3$ and $c_4$ respectively, and $A'_{\uparrow}$ is any rectangle of height $(1-20\delta')r$ and width $16\delta'r$ glued on top of $c_1$ and strictly between the left side of $c_3$ and the right side of $c_4$. In both cases, we deduce that the topological component of $\bigcup(N(\cS)\setminus\cS)$ that touches the upper piece of $\gamma$ has area at least $(1+\delta'/6)\pip r^2/2$. Since some cells in this component touch one side of $\US$,   Lemma~\ref{l:P2} implies that $N(\cS)\setminus\cS$ must contain some dense cell.
\end{case}
\begin{case}\lab{c:23sides}
Let $\cS\subseteq\cC$ be a connected set of cells which is not small. Suppose first that $N(\cS)$ touches exactly two sides of $\US$ which are adjacent (say the bottom and the left sides of $\US$). If the horizontal extent of $\cS$ is at most $4r$, then $N(\cS)\setminus\cS$ has some cell inside the $5r\times5r$ square on the bottom left corner of $\US$. But these cells are all dense by Lemma~\ref{l:P2} and we are done. Hence we can assume that $\cS$ has horizontal extent greater than $4r$. In the other cases that $N(\cS)$ touches two non-adjacent sides or three sides of $\US$, we can assume without loss of generality that $N(\cS)$ touches the left and right sides of $\US$ but not the top side. Therefore, in all the cases considered, $\cS$ must contain some cells intersecting each of the five first vertical stripes of width $r$ at the left side of $\US$. Let $c_1$, $c_2$, $c_3$, $c_4$ and $c_5$ be the uppermost cells in $\cS$ intersecting each of the five vertical stripes. These cells are not necessarily all different, but for each $c$ of these, either $\BNW(c)$ or $\BNE(c)$ is completely contained in the corresponding strip.
Thus, the topological component of $\bigcup(N(\cS)\setminus\cS)$ that touches the upper piece of $\gamma$ has area at least $5(1-4\delta')^2\pip r^2/4>(1+1/8)^2\pip r^2$, and by Lemma~\ref{l:P2}, $N(\cS)\setminus\cS$ must contain some dense cell.
\end{case}
\begin{case}\lab{c:4sides}
Let $\cS_1$ and $\cS_2$ be connected sets of cells not adjacent to each other (i.e.\ $\cS_2\cap N(\cS_1)=\emptyset$) and such that both $N(\cS_1)$ and $N(\cS_2)$ touch all four sides of $\US$. Note that by Lemma~\ref{l:P2} all cells inside the $5r\times5r$ square on the top left corner of $\US$ are dense. Assume that none of these cells belongs to $N(\cS_1)\setminus\cS_1$ or $N(\cS_2)\setminus\cS_2$ (otherwise we are done). It could happen that these cells in the top left square are either all in $\cS_1$ or all in $\cS_2$. Assume they are not in $\cS_1$. Then consider, as in Case~\ref{c:23sides}, the uppermost cells $c_1$, $c_2$, $c_3$, $c_4$ and $c_5$ in $\cS_1$ intersecting each of the five first vertical stripes of width $r$ at the left side of $\US$. The same argument shows that the topological component of $\bigcup(N(\cS_1)\setminus\cS_1)$ that touches the upper left piece of $\gamma$ has area at least $(1+1/8)^2\pip r^2$, and Lemma~\ref{l:P2} completes the proof.\qedhere
\end{case}
\end{proof}

Finally, we need to show that bad components  a.a.s.\ have some properties  to be used in the construction of the Hamilton cycles.
Given a component $b$ of $\GB$, let $J=J(b)\subseteq\N$ be the set of indices of points in $\bX$ contained in some cell of $b$. Moreover, for any $r'$, consider the set $J'=J'(b,r')=N_{\GXrp}(J)\setminus J$ (i.e.\ the set of strict neighbours of $J$ in a random geometric graph of radius $r'$).
\begin{lem}\lab{l:ring}
For a small enough constant $\delta>0$, any constant $\lambda\in\real$, $r$ defined by $\pip nr^2=\log n+(k-1)\log\log n + \lambda$ and $r\le r'\le(1+1/32)r$, the following is a.a.s.\ true.
For each small component $b$ of $\GB$, there exists a connected set of dense cells $\cR(b)\subseteq\cD_0$ of size $0<|\cR(b)|\le10/\delta^2$ such that
\begin{enumerate}
\item for every $i\in J'(b,r')$, the cell containing $X_i$ is adjacent to some cell in $\cR(b)$, and
\item $\cR(b)\cap\cR(\widetilde b)=\emptyset$ and $J'(b,r')\cap J'(\widetilde b,r')=\emptyset$, for any other small component $\widetilde b$ of $\GB$.
different from $b$.
\end{enumerate}
\end{lem}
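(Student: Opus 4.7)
The plan is to build $\cR(b)$ as a small connected subset of $\cD_0$ lying in a thin annular neighbourhood of $b$. Since $b$ is small, the union $B:=\bigcup b$ fits inside a square of side at most $16\delta' r$, and every $X_i$ with $i\in J'(b,r')$ lies at $\ell_p$-distance at most $r'\le(1+1/32)r$ from $B\cap\bX$; hence the cells containing such points, say $\cC^*(b)$, lie within $\ell_p$-distance $r'+O(\delta r)$ of $B$. It thus suffices to find a connected subset of $\cD_0$, located in a bounded neighbourhood of $B$ of diameter $O(r)$, every cell of which is $\GC$-adjacent to each cell of $\cC^*(b)$ it faces.

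To construct it, I would introduce a candidate annulus $\mathcal{A}(b)$ consisting of all cells at $\ell_\infty$-distance within an interval $[a_1 r, a_2 r]$ from $B$, with constants $0<a_1<a_2$ chosen so that every cell of $\cC^*(b)$ is $\GC$-adjacent to at least one cell of $\mathcal{A}(b)$ in its vicinity, and such that $|\mathcal{A}(b)|=O(1/\delta^2)$ (matching the target bound $10/\delta^2$ after calibration). Partition $\mathcal{A}(b)$ (or a slightly thickened version) into $O(1)$ sectors, each of area at least $(1+\alpha)\pip r^2$ in the interior of $\US$, or at least $(1+\alpha)\pip r^2/2$ for sectors abutting a side of $\US$. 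By Lemma~\ref{l:P2}, each sector contains at least one dense cell, and by Lemma~\ref{l:nolarge} the chosen dense cells must belong to $\cD_0$: a dense cell outside $\cD_0$ lies in a small component of $\GD$ and hence in some small bad component, which by smallness cannot cover a whole sector and can be steered around. Joining the selected dense cells by short paths inside $\cD_0\cap\mathcal{A}(b)$ (which exist because $\cD_0$ is the unique large component of $\GD$ and dominates the surroundings) produces a connected $\cR(b)\subseteq\cD_0$ of size at most $10/\delta^2$, $\GC$-adjacent to every cell of $\cC^*(b)$ by construction.

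For the pairwise disjointness across distinct small bad components $b,\widetilde b$, observe that $\cR(b)$ is localized to a disk of radius $O(r)$ around $B$, and $J'(b,r')$ arises from points at $\ell_p$-distance at most $r'$ from $B$. It therefore suffices to show that distinct small bad components are a.a.s.\ at pairwise $\ell_p$-distance greater than some constant $Cr$. This should follow from an argument in the spirit of Lemma~\ref{l:nolarge}: two small bad components within distance $Cr$ would together with the intervening cells form a forbidden configuration, either yielding a larger non-dense connected region than the small-component bound permits, or bringing both components into a common $\GB$-component. The main obstacle of the whole proof is this interplay between the need for $\cR(b)$ to be both connected and wholly contained in $\cD_0$, while simultaneously being small enough and localized enough to be disjoint from analogous structures around other bad components; all three requirements are ultimately handled through the area-based existence of dense cells guaranteed by Lemma~\ref{l:P2} and the uniqueness of the large component of $\GD$ guaranteed by Lemma~\ref{l:nolarge}.
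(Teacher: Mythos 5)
Your overall shape (an annular set of dense cells around $b$, existence of dense cells via Lemma~\ref{l:P2}, membership in $\cD_0$ via Lemma~\ref{l:nolarge}, disjointness by locality) matches the paper, but the proposal has a genuine gap at its core: Lemma~\ref{l:P2} applied to disjoint sectors only produces one dense cell per connected region of area $(1+\alpha)\pip r^2$, which is far too coarse a resolution. With one dense cell per sector you cannot guarantee that consecutive selected cells are $\GC$-adjacent, so you fall back on ``joining by short paths inside $\cD_0\cap\mathcal{A}(b)$'' --- but nothing you cite guarantees such paths exist inside the annulus ($\cD_0$ is connected globally, and a path between two of your dense cells may leave the annulus entirely, destroying both the bound $10/\delta^2$ and the locality needed in item~2). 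Likewise, a vertex $i\in J'$ must end up adjacent to a \emph{selected dense} cell, not merely to some cell of $\mathcal{A}(b)$; with $O(1)$ dense cells spread over sectors of linear size $\ge r$ this fails. The paper's key trick, which is what is missing here, is to tile a ring at distance $\approx 5r/4$ from the centre $O$ of the grid covering $b$ by squares of side $\approx r/16$ and, for each such square $q$, apply Lemma~\ref{l:P2} to the connected set $q\cup(N(b)\setminus b)$: since $N(b)\setminus b$ has area close to $\pip r^2$ and (because $b$ is a bad component) contains \emph{no} dense cell, the dense cell forced by Lemma~\ref{l:P2} must lie in $q$. This yields a dense cell in every $r/16$-square of the ring, so the selected cells are automatically $\GC$-connected (cells in topologically adjacent small squares are adjacent in $\GC$), every cell holding a point of $J'(b,r')$ is adjacent to one of them (with the points very close to $b$ handled separately through $N(b)\setminus b$), and containment in $\cD_0$ follows at the end from Lemma~\ref{l:nolarge} because the whole connected dense ring cannot fit in a $16\times16$ grid --- rather than from your per-cell ``steer around small bad components'' argument, which is also unsupported (Lemma~\ref{l:P2} gives only one dense cell per sector, and several small bad components could meet a sector).

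The disjointness step is a second gap. You reduce it to an unproved a.a.s.\ claim that distinct small bad components are at distance $>Cr$, justified by a ``forbidden configuration'' that would either merge them into one $\GB$-component or violate smallness; neither mechanism works for separations strictly between $r$ and $Cr$, since components at distance, say, $2r$ are not $\GC$-adjacent, and the intervening cells need not be bad, so Lemma~\ref{l:nolarge} says nothing about the union. The separation is true, but the paper obtains it deterministically from the construction itself: it builds a second ring $\cR'$ of dense $\cD_0$-cells at distance $\approx 7r/4$ from $O$ (by the same square-gluing argument) and observes that any cell of another bad component $\widetilde b$ within distance $41r/16$ of $O$ would have to be adjacent to a cell of $b$, $\cR$ or $\cR'$ --- impossible, since $\widetilde b$ is a different component of $\GB$ and bad cells are never adjacent to dense cells of $\cD_0$. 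Without something playing the role of this barrier, your disjointness claim (for both $\cR$ and $J'$) is not established.
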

\begin{proof}
Let $b$ be a small component of $\GB$, and let $g$ be any $16\times16$ grid covering $b$. Let $O$ denote the geometric centre of the grid $g$, and let $\cS$ be the set of cells which have some point at distance between $3r/4$ and $3r/2$ from $O$. Take as $\cR(b)$ the subset $\cR=\cS\cap\cD$ formed by the dense cells in $\cS$. This set will be shown to have all the desired properties. (Note that the size of $\cR$ is $|\cR|\le|\cS|<10/\delta^2$.)

Consider a coarser tessellation of $\US$ into larger squares of side $\lfloor1/(16\delta')\rfloor\delta'r$ (each square containing exactly $\lfloor1/(16\delta')\rfloor^2$ cells). We refer to each square both as a subset of $\US$, and as the set of cells it contains.
Let $\cQ$ be the set of squares of the coarser tessellation that contain at least one point at distance exactly $5r/4$ from $O$.
By construction, all squares in $\cQ$ are contained inside $\cS$. Moreover, we claim that all squares in $\cQ$ contain some dense cell. In fact, by choosing $\delta$ sufficiently small, we can guarantee that each square $q\in\cQ$ has no intersection with $N(b)\setminus b$, and thus $q\cup(N(b)\setminus b)$ is a connected set of cells of area at least
\[
\pip(1-34\delta')^2r^2 + \lfloor1/(16\delta')\rfloor^2\delta'^2r^2 \ge (\pip+1/257)r^2.
\]
Hence, assuming that statement 1 in Lemma~\ref{l:P2} holds, $q\cup(N(b)\setminus b)$ must contain some dense cell, which must be in $q$ since $N(b)\setminus b$ does not contain any.

Since the union of squares in $\cQ$ is topologically connected, and each pair of cells lying in topologically adjacent squares of $\cQ$ are also adjacent in $\GC$, the dense cells in squares of $\cQ$ induce a connected set of cells. Moreover, for any other cell $c$ in $\cS$ there is some square $q\in\cQ$ such that $c$ is adjacent to all cells in $q$. Hence,
$N(\cR)\supseteq\cS$, and also $\cR$ induces a connected set of cells. Since $\cR$ cannot be embedded in a $16\times16$ grid of cells, $\cR$ must be contained in $\cD_0$.

Now consider any vertex $i\in J'=J'(b,r')$.
If $\dist(X_i,O)\le3r/8$, then the cell $c$ containing $X_i$ must be in $N(b)\setminus b$. Therefore, since $b$ is a component of $\GB$, $c$ must be sparse but adjacent to some dense cell $d\in\cD_0$. By construction, any point in $d$ must be at distance between $(1-34\delta')r$ and $(11/8+2\delta')r$ from $O$, so $d\in\cR$. Otherwise, suppose that $\dist(X_i,O)>3r/8$. We also have $\dist(X_i,O)\le(1+1/32+16\delta')r$, since $i\in J'$. Then the cell $c$ containing $X_i$ must be adjacent to all cells in some square $q\in\cQ$, and in particular to some dense cell in $\cR$.

To verify the other requirements, define $\cQ'$ to be the set of squares of the coarser tessellation with some point at distance exactly $7r/4$ from $O$. The same argument we used for $\cQ$ shows that all squares in $\cQ'$ contain some dense cell. Let $\cR'$ be the set of dense cells in squares of $\cQ'$.
Then it is immediate to verify that any point in a cell $c$ of some other small component $\widetilde b\ne b$ of $\GB$ must be at distance at least $41r/16$ from $O$ since otherwise $c$ would be adjacent to some cell in $b$, $\cR$ or $\cR'$. All remaining statements follow easily from that.
\end{proof}
\remove{
\begin{lem}\lab{l:ring:old}
For a small enough constant $\delta>0$, any constant $\lambda\in\real$ and $r$ defined by $\pip nr^2=\log n+(k-1)\log\log n + \lambda$, the following is a.a.s.\ true.
For each small component $\cS$ of $\GB$,
\begin{enumerate}
\item $N'\langle\cS\rangle\cap N'\langle\cS'\rangle=\emptyset$ for any other small component $\cS'\ne\cS$ of $\GB$, and
\item there exists a connected set of cells dense $\cR\subseteq\cD_0$ of size $0<|\cR|\le10/\delta^2$ such that,
for every vertex $i\in N'\langle\cS\rangle\setminus\langle\cS\rangle$, $X_i$ belongs to a cell adjacent to some cell in $\cR$.
\end{enumerate}
\end{lem}
\begin{proof}
Let $\cS$ be a small component of $\GB$, and $g$ be any $16\times16$ grid covering $\cS$. Call $O$ to the geometric centre of the grid $g$.
We define the ring $\cR'$ around $g$ to be the set of cells which have some point at distance between $3r/4$ and $9r/4$ from $O$, and consider the subset $\cR=\cR'\cap\cD$ formed by its dense cells.
Note that the size of $\cR$ is $|\cR|\le|\cR'|<10/\delta^2$.

Now consider a coarser tessellation of $\US$ into larger squares of side $\lfloor1/(8\delta')\rfloor\delta'r$ (each square containing exactly $\lfloor1/(8\delta')\rfloor^2$ cells). With a mild abuse of notation, we identify each square with the set of cells it contains.
Let $\cQ$ be the set of squares of the coarser tessellation with some point at distance exactly $3r/2$ from $O$.
By construction, all squares in $\cQ$ are contained inside $\cR'$. Moreover, by choosing $\delta$ sufficiently small, we can guarantee that each square $\cQ_0\in\cQ$ has no intersection with $N(\cS)\setminus\cS$, and thus $\cQ_0\cup(N(\cS)\setminus\cS)$ is a connected set of cells of area at least
\[
(1-18\delta')^2r^2 + \lfloor1/(8\delta')\rfloor^2\delta'^2r^2 \ge (1+1/65)r^2.
\]
Hence, assuming that the first a.a.s.\ statement in Lemma~\ref{l:P2} holds, $\cQ_0\cup(N(\cS)\setminus\cS)$ must contain some dense cell, which must be in $\cQ_0$ since $N(\cS)\setminus\cS$ does not contain any.

But since the union of squares in $\cQ$ is topologically connected, and each pair of cells lying in topologically adjacent squares are also adjacent in $\GC$, the dense cells in squares of $\cQ$ induce a connected set of cells. Moreover, for any other cell $c$ in $\cR'$ there is some square $\cQ_0\in\cQ$ such that $c$ is adjacent to all cells in $\cQ_0$. Hence,
$N(\cR)\supseteq\cR'$, and in particular $\cR$ induces a connected set of cells. Since $\cR$ cannot be embedded in a $16\times16$ grid of cells, $\cR$ must be contained in $\cD_0$.

To verify the other requirements, consider any vertex $i\in N'\langle\cS\rangle\setminus\langle\cS\rangle$. If $\dist(X_i,O)<3r/4$, then the cell $c$ containing $X_i$ must be in $N(\cS)\setminus\cS$. Therefore, since $\cS$ is a component of $\GB$, $c$ must be sparse but adjacent to some dense cell $d\in\cD_0$. By construction, any point in $d$ must be at distance between $(1-18\delta')r$ and $(7/4+2\delta')r$ from $O$, so $d\in\cR$. Moreover, let $\cS'\ne\cS$ be another small component of $\GB$, and consider any cells $c\in\cS$ and $c'\in\cS'$. Then the distance between $c'$ and $O$ must be greater than $9r/4$ (otherwise $c'$ would belong to $N(\cS)\cup\cR(g)$), and there is no point in $\US$ at distance at most $17r/16$ from both $c$ and $c'$. Hence, $N'\langle\cS\rangle\cap N'\langle\cS'\rangle=\emptyset$.
\end{proof}
}


\section{Packing linear forests in bipartite graphs}
\lab{s:colouring}

 A {\em factorisation} of a graph is the set of subgraphs induced by a partition of the edge set. A {\em hamiltonian decomposition} of a graph is a factorisation in which at most one subgraph is a perfect matching, and all the remaining ones  are   Hamilton cycles. We call a matching that contains an edge of each of the Hamilton cycles in the decomposition a {\em transversal} of the decomposition. (Note that the transversal does not contain an edge of the perfect matching.) The construction of a hamiltonian decomposition in the following result  is well known.  We will use features of the construction in the proof of Lemma~\ref{l:colouring}, and we use the transversal in Section~\ref{s:build}.
\begin{lem}\lab{l:decomp}
Every complete graph has a hamiltonian decomposition with a transversal.
\end{lem}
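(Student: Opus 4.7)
The plan is to realise Walecki's classical hamiltonian decomposition of $K_n$ for each parity of $n$, and then in each case select the transversal by picking a symmetric ``long'' edge from each Hamilton cycle; translation invariance of the construction reduces the disjointness check to one-line arithmetic.

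For odd $n = 2m+1$, I label the vertex set as $\{\infty\} \cup \mathbb{Z}_{2m}$ and take
\[
H_0 \colon \infty,\ 0,\ 1,\ -1,\ 2,\ -2,\ \ldots,\ m-1,\ -(m-1),\ m,\ \infty
\]
with integer labels read modulo $2m$. For $i=0,1,\ldots,m-1$, obtain $H_i$ from $H_0$ by adding $i$ to every integer label while fixing $\infty$. A direct count shows the non-$\infty$ edges of $H_0$ realise each cyclic length in $\{1,\ldots,m-1\}$ exactly twice and length $m$ exactly once; this implies $H_0,H_1,\ldots,H_{m-1}$ are pairwise edge-disjoint and together cover $K_n$. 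Let $\{a_0, a_0+m\}$ be the unique length-$m$ edge of $H_0$ and set $e_i = \{a_0+i,\, a_0+m+i\}$ as the transversal edge from $H_i$. Two such edges $e_i, e_j$ can share a vertex only when $i \equiv j$ or $i-j \equiv \pm m \pmod{2m}$; for distinct $i, j \in \{0,\ldots,m-1\}$ both conditions fail, so $\{e_0,\ldots,e_{m-1}\}$ is a matching (in fact, a perfect matching on $V \setminus \{\infty\}$).

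For even $n = 2m$, I label the vertex set $\{\infty\} \cup \mathbb{Z}_{2m-1}$ and use Walecki's $1$-factorization $M_i = \{\{\infty, i\}\} \cup \{\{i+j, i-j\} : 1 \le j \le m-1\}$ for $i \in \mathbb{Z}_{2m-1}$. I combine these into $H_i := M_{2i} \cup M_{2i+1}$ for $i = 0, 1, \ldots, m-2$, leaving $M_{2m-2}$ to play the role of the perfect-matching factor. Tracing from $\infty$ and alternating $M_0$- and $M_1$-edges produces the walk $\infty,\ 0,\ 2,\ -2,\ 4,\ -4,\ \ldots,\ 2(m-1),\ -2(m-1),\ \infty$; since $-2(m-1) \equiv 1$ in $\mathbb{Z}_{2m-1}$ and $\{1, \infty\} \in M_1$, this visits every vertex exactly once and closes, so $H_0$ is a single Hamilton cycle, and hence so is each translate $H_i$. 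For the transversal, choose $e_i$ to be the $j=m-1$ edge of $M_{2i}$, namely $\{2i+m-1,\, 2i+m\}$ in $\mathbb{Z}_{2m-1}$. Two such edges could share a vertex only when $i \equiv j$ or $i-j \equiv \pm m \pmod{2m-1}$, forcing $|i-j| \in \{0, m-1, m\}$, none of which occurs for distinct $i, j \in \{0,\ldots,m-2\}$. Finally, $e_i \in M_{2i} \subset H_i$ and $2i \neq 2m-2$, so $e_i$ lies in a Hamilton factor and not in $M_{2m-2}$, as required.

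The only step that is not routine index arithmetic is confirming $M_0 \cup M_1$ is a single Hamilton cycle rather than a disjoint union of shorter (necessarily even) cycles, and the explicit trace above settles it.
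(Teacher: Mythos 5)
Your construction is correct, and at bottom it is the same classical Walecki decomposition that the paper uses: the paper encodes it as an explicit edge-colouring of $K_{k+1}$ (colour $\lceil (u+v)/2\rceil$ on the edge $uv$, plus a prescribed rule at the apex $*$), a form it deliberately chooses because that concrete colouring is re-used later in the proof of Lemma~\ref{l:colouring}, whereas you present it as a zigzag starter cycle rotated by $0,\dots,m-1$ for odd order, and for even order as the pairs $M_{2i}\cup M_{2i+1}$ of the round-robin one-factorization with $M_{2m-2}$ kept as the perfect-matching factor. Your transversals also differ from the paper's (which takes edges of the form $\{2i,2i+1\}$ with a small case split on $k \bmod 4$): rotating the unique ``diameter'' edge in the odd case and taking the $j=m-1$ edge of $M_{2i}$ in the even case makes the matching check a clean congruence computation, and your explicit trace showing that $M_0\cup M_1$ is a single Hamilton cycle (rather than several even cycles) is correct and is exactly the point that needed checking in that route. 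As a proof of the lemma as stated, your version is as good as the paper's; the only thing the paper's version additionally buys is the explicit colour formula that Section~3 leans on.

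One step is stated too quickly, in the odd case: you claim that the length statistics of $H_0$ (each cyclic length $1,\dots,m-1$ twice, length $m$ once) ``imply'' that $H_0,\dots,H_{m-1}$ are pairwise edge-disjoint and cover $K_n$. Counting alone does not give this, because you only use $m$ of the $2m$ rotations: you also need that the two edges of each length $d\in\{1,\dots,m-1\}$ in $H_0$ are translates of one another by exactly $m$ (equivalently, the non-$\infty$ part of $H_0$ is invariant under $x\mapsto x+m$), so that the $m$ rotations hit each length-$d$ edge exactly once; if the two length-$d$ edges were offset by some $c$ with $0<|c|\le m-1$, distinct rotations would share an edge. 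This antipodal property does hold for your zigzag --- the edge of absolute difference $s$ and the edge of absolute difference $2m-s$ are shifts of each other by $m$, a one-line computation from the explicit path (e.g.\ in $\mathbb{Z}_6$ the two length-$1$ edges are $\{0,1\}$ and $\{3,4\}$) --- so the gap is easily filled, but it is the actual reason the $m$ rotations tile $K_n$ and should be said explicitly rather than attributed to the count.
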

\noindent Note that the number of Hamilton cycles in such a  decomposition of $K_{k+1}$ will be $\lfloor k/2\rfloor$, and thus for $k$ odd the transversal is not quite a perfect matching.
\begin{proof}
First, for $k$ even, consider the complete graph $K_{k+1}$ on the vertices $\{1,2,\ldots,k,*\}$. We shall first colour the edges of $K_{k+1}$.  Expressions referring to vertex labels other than $*$ are interpreted mod $k$ and expressions referring to colour labels are mod $ k/2$. (In this paper, mod denotes taking the remainder on division.)

For each pair of vertices $u$ and $v$ in $\{1,2,\ldots,k\}$, assign the colour 
\begin{equation}\lab{colour}
\lceil (u+v)/2 \rceil
\end{equation}
 (mod $k/2$ of course) to the edge  $u v$. Also, assign colour $i$ to the edges from $*$ to both vertices $i$ and $i+k/2$. See Figure~\ref{f:coloured}.

\begin{figure}[htb]
\begin{centering}
\pic
\lab{f:coloured}
\end{centering}
\end{figure}
 
It is easy to check that, for each $i\in\{1,\ldots,k/2\}$, the edges receiving colour $i$ form a $(k+1)$-cycle $(v_0,\ldots,v_k)$ where   
\[
v_0=*,\qquad v_1=i,\qquad v_{t+1}=v_t+(-1)^tt, \quad\forall t\in\{1,\ldots,k-1\},
\]
or equivalently
\[
v_0=*,\qquad   v_t=i-(-1)^t\lfloor t/2\rfloor, \quad\forall t\in\{1,\ldots,k\}.
\]
Thus, the colouring induces a factorisation of $K_{k+1}$ into $k/2$ Hamilton cycles of colours $1,\ldots,k/2$, giving the required hamiltonian decomposition.

When $k$ is congruent to 2 mod 4,  the set of edges $\{2i,2i+1\}$ ($i=0,\ldots, k/2-1$) is a transversal.    When $k$ is   divisible by 4, one transversal uses the edges $\{2i,2i+1\}$ ($i=0,\ldots, k/4-1$), the edge from $*$ to $k/2$, and the edges $\{k/2 + 2i-1,k/2 + 2i\}$ ($i=1,\ldots, k/4-1$).   

For odd $k$,  a perfect matching needs to be included. There is a similar colouring scheme, using the colours $1,\ldots , (k+1)/2$, where colours are taken mod $ (k+1)/2$.  In this case, colour $\lceil (u+v \mod k)/2 \rceil$
is on the  edge $uv$ (note we assume by convention that $u+v \mod k\in\{0,\ldots, k-1\}$),
each  colour $i$ ($i\in\{1,\ldots , (k-1)/2\}$) is on the edge from $i$ to $*$ , and each  colour $(k+1)/2-i$ ($i\in\{0,\ldots , (k-1)/2\}$) is on the edge from $k-i$ to $*$.
The edges of colour $(k+1)/2$ form a perfect matching, and each of the other colours gives a Hamilton cycle.  
Finally, the transversal for odd $k$ is easily found, similar to the even $k$ case, using more or less every second edge of the form $\{i,i+1\}$. 
\end{proof}

\noindent
{\bf Proof of Lemma~\ref{l:colouring}}
Since the statement is trivial for $j=1$, we may assume that $j\ge2$. Firstly, we deal with the case of even $k$. 
We will  assign colours in $\{1,\ldots,k/2\}$ to a subset of the edges of $G$ such that the colour classes determine the decomposition into linear forests. With a slight abuse of standard notation we will call this an edge colouring of $G$. For simplicity of exposition, we assume that the degree lower bounds for $H$ are all met precisely. It will be evident from the proof that if any lower bounds on the degrees of vertices in $J$, that are specified in the lemma statement,  are exceeded, it can only help by giving more choices in various steps. Hence, we may assume that the apex vertex has degree exactly $\ell+1$, and all others have degree exactly $\ell$.
Label the apex vertex in $J$ with `$*$', and label the other vertices $1,\ldots, j-1$. 

First colour the edges of $K_{k+1}$ as in the proof of Lemma~\ref{l:decomp}, using the same labels on vertices as in that proof.
If we delete the vertices in the set $L=\{j,j+1,\ldots,k\}$ of $K_{k+1}$, the remaining vertices of $K_{k+1}$ are in $J$ and the edges between them in $G$ can inherit the colours from $K_{k+1}$. The set $L$ should be regarded as a set of vertices sitting outside $G$. For $v\in J$, edges to other vertices in $J$ are coloured, but the colour on any edge to $L$ is  `missing' from that vertex in $G$.  When we speak of missing colours, we count them with multiplicities: if a $v$ has no edge of    colour $i$ in $G$ (i.e.\ two edges of   colour $i$ join $v$ to $L$) then   colour $i$ is missing at $v$ with multiplicity 2.  Of course, each vertex is incident with two edges of each colour in $K_{k+1}$. The missing colours need to be assigned to edges of $H$, which go between $J$ and $B$, since each vertex of $J$ must have degree 2 in each of the final linear forests.

We will use a greedy colouring procedure to assign the missing colours to the edges of   $H$, and thereby complete the  desired edge colouring of $G$. The requirement is simply that each colour class must induce a linear forest. (By taking care of the missing colours we are ensuring that all vertices of $J$ have two incident edges of each colour.)
The procedure treats the vertices in the order $1,2,\ldots, j-1$, and finally  the apex vertex,  $*$.  Note that, so far, only edges within $J$ are coloured, and each colour induces a set of paths. This is because the edges of any given colour induce a proper subgraph of the original Hamilton cycle of that colour. The procedure makes the assignments of new edges one by one, so it simply has to avoid all monochromatic cycles at each step, and terminate with each   vertex in $J$ having precisely two incident edges of each colour in $G$, just like they do in $K_{k+1}$, and each vertex in $B$ ($=V(H)\setminus J$) having at most two incident edges of any given colour.

The colouring procedure is defined inductively and requires several observations along the way.
We will first fix a vertex $v\in J\setminus\{*\}$ (i.e. $1\le v\le j-1$)  and specify how the procedure treats the edges of $H$ incident with   $v$. We may  assume inductively that there are no missing colours at vertices $1,\ldots, v-1$, i.e.\ all these vertices are incident with two edges of each colour, and furthermore that at each step  the set of edges with any given colour induces a subgraph consisting of disjoint paths.  The colours missing at $v$ are those on the edges from $v$
to $L$. Let  $i$ be a colour that is missing at $v$. There are two cases. In the first case, $i$ is on just one edge at $v$, from $v$ to $k-t$ say, and either $t=0$ and no edges of  colour $i$ are already present in $H$, or $t=k-j=\ell-1$. In the second case, $i$ is on two edges, from $v$ to $k-t-1$ and $k-t$, for some $0\le t
\le \ell-2$.  (In both cases of course  $t$ can be computed from the colour formula~(\ref{colour}).) Let us refer to this colour $i$ as $i_t$.  

The specification of how the colouring procedure treats $v$ is as follows.  The missing colours $i_t$ are treated one by one in decreasing order of $t$. At any point, let $G_i$ denote the subgraph of $H$ induced by the edges   coloured $i$. Each missing colour $i$ of multiplicity $\delta$, in its turn, is assigned to $\delta$ uncoloured edges of $H$ incident with $v$,  in any manner such that: 
\begin{description}
\item{(i)} $G_i$ remains a linear forest, 
\item{(ii)}  $G_i$ is not a connected graph unless all choices satisfying (i) cause $G_i$ to be connected. 
\end{description} 
Actually, we only need to invoke   rule (ii) when $t = \ell-3$, but it does no harm to enforce it in each step.

For the apex  vertex $*$, which is treated last, the rule is simpler. The procedure assigns the missing colours   to uncoloured edges of $H$ incident with $*$, greedily  subject to rule (i), and the order of treatment of the colours is determined at the start as follows: colours that  already appear on more edges are treated earlier.

To verify that the colouring procedure must terminate with each colour inducing a linear forest, we argue inductively for $v\in J$, $v\ne *$, in the order of treatment.  The apex vertex is considered last. We show that the linear forest condition holds after each vertex is treated.  The argument for the inductive step also applies to the initial step, where $v=1$. 

So, for a vertex $v\in J\setminus \{*\}$, consider the point at which the procedure is treating the colour $i_t$ defined above. Any edges  coloured $i_t$ at earlier steps of the inductive procedure must have been edges from $\{v-t-1,v-t,\ldots, v-1\}\setminus L$ to $L$, and these must go to the vertices  $k-t+1, \ldots, k$. See Figure~\ref{f:coloured}, where the colour $i_t$ is shown in purple. Each vertex in $L$ is incident with at most two such edges of  colour $i_t$, as is each vertex in $\{v-t-1,v-t,\ldots, v-1\}\setminus L$. In particular, $v-t-1$ is  incident with exactly one such edge, the one  joining it to vertex $k$.  Hence, the number of times that colour $i_t$ is missing (counted with multiplicities) on the vertices already treated   is at most $2t+1$. Consequently, $2t+1$ is an upper bound on the number of edges of colour $i_t$ in $H$ at the   point in the  procedure where vertex $v$ is about to be treated. It is not necessary, but may help to note that the distinct colours missing at $v$ are either $i_0, i_2,\ldots, i_{m}$ (where $m=2\lfloor (\ell-1)/2 \rfloor$), if $i_0$ has multiplicity 2, or $i_0, i_1, i_3,\ldots, i_{m}$ (where $m=2\lfloor \ell/2 \rfloor-1$) if $i_0$ has multiplicity 1. 

Recall that $v$ has degree $\ell$ in $H$. Let $\delta$ denote the multiplicity of $i_t$ as a missing colour at $v$.
 When the procedure is about to treat colour $i_t$ at $v$, the colours on edges from $v$ to $j, j+1,\ldots, k-t-\delta$ have already been assigned. This means that $k-t-\delta-j+1$ of the edges   incident with $v$ are already assigned colours. As $d_H(v)=\ell=k-j+1$, there are precisely $t+\delta$  edges of $H$ incident with $v$ that remain uncoloured. Recall that $H$ has at most $2t+1$ edges already coloured $i_t$. Since $H$ is bipartite, at most $t$ vertices in $B$ can have degree 2 in $G_{i_t}$. Hence, there must be at least $\delta$ (which is either 1 or 2, precisely the multiplicity of $i_t$) uncoloured edges from $v$ to vertices of $B$ having degree at most 1 in $G_{i_t}$.  So it is possible to assign colour $i_t$ to  $\delta$ of these edges at this point without creating any vertices of degree greater than 2 in $G_{i_t}$. We need to show that this can always be done so as to satisfy condition (i), that is, without creating a cycle in $G_{i_t}$.  

Before proceeding, we need to understand the ways that such a cycle can form. If $\delta=2$, then two uncoloured edges joining $v$ to  $N_H(v)$ must be picked with the purpose of colouring them  $i_t$, and a cycle is created if and only if the two end-vertices in $N_H(v)$ are the two ends of a path in $G_{i_t}$. If $\delta=1$, a cycle is created if and only if the edge picked is the end-vertex of a path in $G_{i_t}$, of which $v$ is the other end-vertex.  

Let $U$ denote the set of vertices in $N_H(v)$ that are joined by uncoloured edges to $v$. It is always possible to avoid the cycle in question if  $U$ contains  more than $\delta$ vertices of degree less than 2 in $G_{i_t}$. For, if this is true in the $\delta=2$ case, then, even if one end of a path of $G_{i_t}$ is picked for the first edge to be coloured $i_t$, the second edge can avoid the other end of the same path. The case $\delta=1$   is even easier. There is similarly no problem if $U$ contains a vertex of degree 0 in $G_{i_t}$. 

So, if the the edges of colour $i_t$ do not form a linear forest  after the procedure treats $v$, we may assume that $U$ contains precisely $\delta$ vertices of degree 1 in $G_{i_t}$, and that the other $t$ vertices of $U$ (recall that $|U|=t+\delta$) have degree 2 in $G_{i_t}$. In particular, the number of edges of colour $i_t$ in $H$ is precisely $2t+\delta$. However, above we deduced that it is at most $2t+1$. Hence, $\delta=1$, and there must be precisely $t$ vertices in $B$ of degree 2 in $G_{i_t}$. That is, $i_t$ is the colour of only one edge from $v$ to $L$. As explained above, this happens only if, on the one hand, $t=0$ and there are no edges already of colour $i_t$ in $H$, or, on the other hand, $t=\ell-1$. The first case contradicts the fact that $U$  contains precisely $\delta$ vertices of degree 1 in $G_{i_t}$. So the second case holds, and $t$ must be equal to $\ell-1$. This means that $u_t$ is the first colour being treated by the process at $v$, and furthermore, the $t+1$ vertices in $U$ comprise exactly $N_H(v)$. 

It was observed above that the only other vertices sending edges to $L$  of colour $i_t$ are  $\{v-t-1,v-t,\ldots, v-1\}\setminus L$ and the maximum number of such edges is $2t+1$. The edges in $H$ of colour $i_t$ come from these vertices. Since this upper bound is achieved, the situation is tight: $v-t-1$ is incident with one such edge in $H$, and the $t$ vertices in $\{v-t-1,v-t,\ldots, v-1\}$ are incident with two each. Recalling that $U$ has precisely one vertex of degree 1 in  $G_{i_t}$ and the rest have degree 2, we conclude that the graph $G_{i_t}\cap H$ has precisely two vertices of degree 1 and the rest of degree 2. Since by induction it contains no cycle, it is a path $P$, and one end-vertex of $P$ is $v-t-1$. Considering the original colouring of $J$, the only other edges of colour $i_t$ in $G$ at this point form a path on the vertices $\{1,\ldots, v-t-1\}$ that is vertex-disjoint from $P$ except for the vertex $ v-t-1$. Hence, $G_{i_t}$ is a path and is hence connected.

Shortly we will also need a different observation. Note that if $t=0$, which means $\ell=1$, then the hypotheses imply that $v$ has precisely one neighbour in $B$, which is distinct from the neighbours of $1,2,\ldots, v-1$. So this cannot be the case. It follows that $t+1=\ell\ge 2$.

Since there is at least one edge already coloured $i_t$, we have $v\ge 2$. Hence, the vertex $v-1$ was already treated, and has two edges of this same  colour $i_t$ in $H$ joining it to $B$. Call them $x$ and $y$. Shift the focus back to the time that the procedure, when treating $v-1$, coloured $x$ and $y$. After they were coloured, as observed above, $G_{i_t}$ became   connected. By rule (ii), it was necessary that that no other choice of two uncoloured edges from 
$v-1$ to $B$ could avoid creating a connected graph $G_{i_t}$. However, $x$ and $y$ are two consecutive edges in the path $P$ defined above. Let $w$ denote any vertex of $N(v-1)\cap B\setminus N(v)$, which must exist by the second hypothesis of the lemma. Note that $w$ is adjacent to no edges of colour $i_t$. Also note that  $i_t$ is the first colour treated by the procedure when dealing with the vertex $v_{t-1}$, since this is the colour of the edge from $v_{t-1}$ to the vertex $j = k-t$. Hence, at that point in the procedure, all edges of $H$ incident with $v_{t-1}$ are uncoloured, in particular the edge to $w$.

We consider two cases. Firstly, if $x$ or $y$ is incident with an end-vertex $u$ of $P$, it must be that $u$ lies in $B$. There are no other edges of colour $i_t$ incident with $u$. So, instead of colouring $x$ and $y$ using the colour $i_t$, the edges to $u$ and $w$ could have been used instead, and these would form a separate path in $G_{i_t}$, making it disconnected, which is a contradiction by rule (ii). If, on the other hand, $x$ and $y$ are elsewhere in $P$, then   the graph $P'$  induced by the edges of $P$ other than $x$ and $y$ is disconnected. In this case, the procedure could have placed the colour $i_t$ on the edge $x$, and on the edge from  $v_{t-1}$ to $w$, again a contradiction since $G_{i_t}$ becomes disconnected.

It follows that our assumptions about $U$ above (that it contains precisely $\delta$ vertices of degree 1, etc.)\ are false. This concludes what was required to show that after the procedure treats $v\in J\setminus \{*\}$, the edges of each colour induce a linear forest. 

Finally, we turn to the apex vertex, $*$.  Note that the multiset of colours missing at $*$ is precisely $\{j,j+1,\ldots,k\}$:  each such colour $i$ lies on an   edge from $*$ to the vertex $i$. 

We first consider $j>k/2$. In this case the missing colours at $*$ all have multiplicity 1, so it is a simpler situation than for smaller $j$. If we list the colours in the following order: $i_1=j$, $i_2=k$, $i_3=j+1$, $i_4=k-1$, $i_5=j+2$ and so on, it is easily seen that  the number of edges already coloured $i_t$ is precisely $2\ell+1-2t$ ($1\le t\le \ell$). So the colouring procedure treats them in the order $i_1,i_2,\ldots,i_\ell$. Since $*$ has degree $\ell+1$ in $H$, the number of uncoloured incident edges it has when treating colour $i_t$ is $\ell+2-t$. At this point, there are only enough edges already coloured $i_t$ to create at most $\ell-t$ vertices of degree 2 in $B$. Hence, there are still at least another two vertices in $B$ joined to $*$ by uncoloured edges.The only way to create a cycle in colouring one of these edges $i_t$ is to use the edge that joins to the other end of the unique path in $G_{i_t}$ that presently begins with $*$. Hence, there is yet another edge available to safely colour $i_t$ so as to maintain the linear forest condition.

It only remains to show that a similar statement holds for the case $j\le k/2$. Here, the colours $1,\ldots ,j-1$ are missing with multiplicity 1 at $*$, and the  colours $j,\ldots ,k/2$ are missing with multiplicity 2. At this point in the colouring procedure, the numbers of edges of colours $1,j-1,2,j-3, \ldots$ are $2j-3, 2j-5, 2j-7,\ldots$ respectively. So the procedure will treat these colours in that order. On the other hand, for each $j\le i\le k/2$, the number of edges of colour $i$ is precisely $2j-2$, and these were placed in pairs, two at a time from each of the vertices $1,\ldots, j-1$. So the procedure treats these colours first. When treating  colour  $i\in\{j,\ldots ,k/2\}$, the number of uncoloured edges from $*$ to $B$ will still remain at least $\ell + 1 -(k-2j)=j+2$ before each colouring step. With only $2j-2$ edges already of   colour $i$ in $H$, it is easy to assign two more edges of $H$ incident with $*$ the colour $i$ without creating a cycle or vertex of degree bigger than 2 in $G_i$: either $G_i$ is a single path with $j$ vertices in $B$, in which case there are two edges to vertices of degree 0 in $G_i$, or $G_i$ has at least two components, in which case the procedure can avoid the at most $j-1$ vertices of degree 2 in $G_i$ and join instead to two of degree 1 or 0, and not in the same component of $G_i$. For the remaining colours in $\{1,\ldots, j-1\}$, set $i_1=1$, $i_2=j$, $i_3=2$ etc in the order given above, so that the number of edges of colour $i_t$ is $2j+1-2t$. So, the argument as in the case $j>k/2$ (with   $\ell$ replaced by $j$ in the appropriate places) shows that the colours can be assigned as required.

Finally, we consider the case of odd $k$, which will be reduced to the even case. Note that the colouring in Figure~\ref{f:coloured} is rotationally symmetric, and thus in the even $k$ case, we can apply the same colouring procedure if the labels of the vertices of the auxiliary $K_{k+1}$ are shifted by some quantity (i.e.\ we can use the labels in $\{a+1,\ldots,a+j-1,*\}$ for the vertices in $J$ and delete from $K_{k+1}$ the vertices in $L=\{a+j,a+j+1,\ldots,a+k\}$). So, for the case of odd $k$, label the apex vertex in $J$ with $*$, and label the other vertices $\lceil \ell/2\rceil,\ldots,\lceil \ell/2\rceil+j-2$. Then, colour the edges of an auxiliary $K_{k+1}$ as in the proof of Lemma~\ref{l:decomp}, using the same labels on the vertices as in the proof. We refer to the colour $(k+1)/2$ as the {\em match} color. Delete from $K_{k+1}$ the vertices in the set $L=\{\lceil\ell/2\rceil-i\mod k \st 1\le i\le \ell\}$, and identify the set of remaining vertices with $J$. As in the case of even $k$, we must assign all the missing colours to the edges of $H$ with the same requirements as before on the non-match colours, but also the match colour must induce a matching in $H$. Note that  this last condition is trivially satisfied since, by our specific choice of $J$ and $L$, at most one edge coloured with the matching colour crosses between $J$ and $L$.

\begin{figure}[htb]
\begin{centering}
\includegraphics[height=10cm]{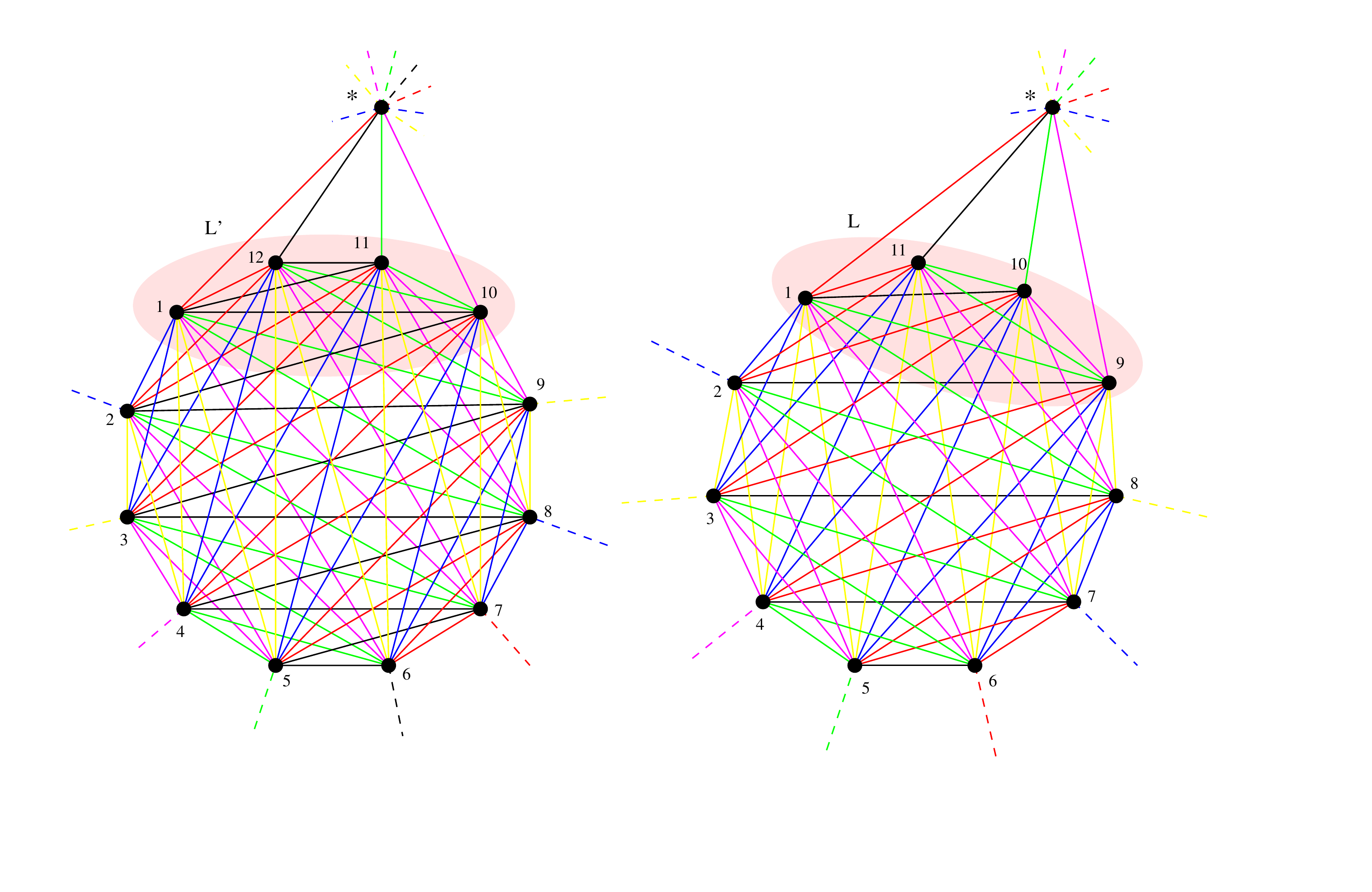}
\caption{\em Reduction of the case $k=11$ to the case $k+1=12$.}
\lab{f:evenodd}
\end{centering}
\end{figure}

To find the required colour assignment to the edges of $H$, consider the bipartite graph $H'$ with parts $J'$ and $B$ resulting from adding a new vertex labelled $\lceil\ell/2\rceil+j-1$ to $J$, with $\ell$ edges from the new vertex to arbitrary vertices of $B$. We shall use the previous greedy procedure to colour the edges of $H'$ noting that $|J'|+\ell-1=k+1$ is even. Take a copy of $K_{k+2}$ (disjoint from $K_{k+1}$) with labels $1,\ldots,k+1,*$ on the vertices, and colour the edges as in the proof of Lemma~\ref{l:decomp}. Partition the vertex set of $K_{k+2}$ into $J' = \{ \lceil\ell/2\rceil,\ldots,\lceil\ell/2\rceil+j-1,* \}$ and $L'=\{\lceil \ell/2\rceil-i \mod k+1\st 1\le i\le \ell\}$, and delete the vertices in $L'$. Colour the edges of $H'$ using the previously described colouring procedure (with all vertex labels shifted by the appropriate constant, so that the vertices in $L'$ have the correct labels). Even though the edge colours used in $K_{k+1}$ and $K_{k+2}$ are different, it is easy to check that, for each $i\in\{\lceil \ell/2\rceil,\ldots,\lceil \ell/2\rceil+j-2,*\}$, the vertex  of $J$ that is labelled $i$ misses exactly the same set of colours as the vertex  of $J'$ labelled $i$ (see Figure~\ref{f:evenodd} for a visual illustration).
Therefore we may simply obtain the edge colouring of $H'$ from the algorithm used for even $k$ applied to $H'$, and then restrict this colouring to   $H$ to obtain a colouring that satisfies the desired properties.
 \qed


\section{Building Hamilton cycles and a perfect matching}
\lab{s:build}
In this section, we use the results in the earlier lemmas to prove Theorem~\ref{t:main}. We first give a complete proof for $k$ even and then provide the extra pieces of argument required for $k$ odd.
\medskip

\noindent
(i) {\bf Proof for $k$  even.}
\smallskip

Let $\epsilon>0$ be arbitrarily small. Let us choose a large enough constant $\lambda_0>0$ such that $e^{-e^{\lambda_0}} < \epsilon$ and $e^{-e^{-\lambda_0}} > 1-\epsilon$. Set 
$$
r_l=\sqrt{\frac{\log n + (k-1)\log\log n - \lambda_0}{\pip n}},
$$ 
$$
r_u=\sqrt{\frac{\log n + (k-1)\log\log n + \lambda_0}{\pip n}}.
$$
{From}~\cite{Pe99}, we know that $\pr(\GXrl \text{ $k$-connected}) \sim e^{-e^{\lambda_0}}<\epsilon$ and $\pr(\GXru \text{ $k$-connected}) \sim e^{-e^{-\lambda_0}}>1-\epsilon$, so looking at the evolution of $\GXr$ for $0\le r\le \Vert(1,1)\Vert_p$, the probability that it becomes $k$ connected somewhere between $r_l$ and $r_u$ is greater than $>1-2\epsilon$. Call this point $r_k$. By the results in Section~\ref{s:assprop}, we may assume that the properties described in Lemmas~\ref{l:nolarge} and~\ref{l:ring} hold for $\lambda=-\lambda_0$ (thus $r=r_l$) and some $\delta$, and also that the property in Lemma~\ref{l:extradeg} holds for $r=r_k$ and $\eta=32\delta$. 
So we may assume $\bX$ to be an arbitrary fixed set of $n$ points in $\US$ in general position and satisfying these properties. The proof is completed by giving a deterministic construction of $k/2$ edge-disjoint Hamilton cycles for the geometric graph $\GXrk$. Most edges will be of length at most $r_l$ but we shall use a few of length between $r_l$ and $r_k$. (The last edges creating  $k$-connectivity arrive during this period, and they are of course necessary to construct $k/2$ edge-disjoint Hamilton cycles.)
We define the edges of each Hamilton cycle by colouring some of the edges of $\GXrk$, using colours $1,\ldots , k/2$, such that each of these colour classes induces a Hamilton cycle.

We take $r=r_l$ (except at special points in the argument) and define $\GC$, $\cD$, $\cB$ and so on accordingly (see Section~\ref{s:assprop}).
Let $T$ be a spanning tree of the largest component $\cD_0$ of $\GD$.  Next, double each edge of $T$ to get an Eulerian multigraph $F$. The   vertex degrees in   $T$ are bounded above by $\Delta$, so those  in $F$ are bounded above by $2\Delta$.  Next, pick an Eulerian circuit $C$ of $F$.

Henceforth, we have no need to consider points in $\US$  that are not members of $\bX$. So, points in $\bX$ contained in some cell $c$ will simply be referred to as points in $c$, and they will be often identified with their corresponding vertices in $\GXrl$ or $\GXrk$. Also, the term \emph{dense cell} will refer only to cells in $\cD_0$, thus excluding these dense cells contained in bad components. For descriptive purposes, we split the rest of the argument into two parts, first treating the case that there are no bad cells, i.e.\ $\cB$ is empty. For this we only need the edges of $\GXrl$. Then we will show how the construction is easily modified to handle the bad components, using some  edges of $\GXrk$.
\medskip

\noindent
{\bf Part 1. $\cB$ is empty.}
\smallskip

In this case, the rest of the proof  involves two steps, which will be used in different forms during the later arguments.
\medskip

\noindent
{\em Step 1. Turning the circuits into cycles}
\smallskip

 The subgraph of $\GXrl$ induced by the points contained in any dense cell is complete and has many more than $k$ vertices.  Lemma~\ref{l:decomp} provides $k/2$ edge-disjoint Hamilton cycles in this subgraph. In fact, it provides more; we just choose a subset of the Hamilton cycles that are given by that lemma. The separate cycles in all the dense cells will be `broken' and  rejoined together using $C$ as a template. In the following discussion we assume $C$ is oriented, so we may speak of incoming and outgoing edges of $C$ with respect to a cell. 

For any  dense cell $c$, the deletion of $c$ from $C$ breaks $C$ up into a number of paths $P_i$. For colour $1$, do the following. Associate each path $P_i$ with an edge $z_i$ that joins two points in $c$ and has already been   coloured $1$, using a different edge $z_i$ for each path $P_i$.  Uncolour the  edges $z_i$, and associate the outgoing and incoming edges of the path $P_i$ (with respect to the cell $c$) each with an endpoint of $z_i$. After doing the same for all dense cells, every edge $cd$ of $C$, where $c$ and $d$ are cells, has now been associated with two points, one in $c$ and one in $d$. Colour the edge joining these two points using colour 1. Doing this for all edges of $C$ clearly joins up all the edges coloured 1 into one big cycle using all points in the dense cells.

Now do the same with colours $2,\ldots, k/2$, one after another, but each time being careful to use edges $z_i$ in each cell that are not adjacent to such edges used with any of the previous colours.   This is easily done because using an edge for one colour eliminates at most four edges of another colour (as the graph induced by edges of a given colour has maximum degree at most 2). So the process can be carried out if $M$ is greater than $2k\Delta$.

\medskip

\noindent
 {\em Step 2. Extending the cycles into the sparse cells}
\smallskip

There are now $k/2$ edge-disjoint coloured cycles, one of each colour, and each cycle uses precisely all the points in dense cells.  Note that within each dense cell, there are still an arbitrarily large number (depending on $M$) of {\em spare} edges of each colour, left over from the original application of Lemma~\ref{l:decomp}.
To prepare for extending  the cycles into the sparse cells, we will break the cycles at these spare edges.

Let $c$ be any sparse cell. By the definition of $\cB$ and our assumption that $\cB$ has no cells, there is a dense cell, say $c'$,  adjacent to $c$ in $\GC$. 
 If $c$ contains at most $2k$ points,  for each vertex $v$ of the geometric graph inside $c$ do the following. Choose a spare edge $z$ inside $c'$ of colour 1, uncolour the spare edge $z$, and colour the two edges from the endpoints of $z$ to $v$ with the colour 1. Any edges of different colours adjacent to $z$ should be deemed not spare after use. Then repeat for each of the other colours. After this, the edges of any given colour form a cycle containing all points in dense cells and in $c$.

On the other hand, if $c$ contains more than $2k$ points, the above process could potentially require too many spare edges, so we must do something else.  By Lemma~\ref{l:decomp}, we can specify $k/2$ edge-disjoint Hamilton cycles around the points in $c$, one of each of the colours. One can then greedily choose an independent set of edges, one of each colour. (This is easily seen by noting that  choosing an edge knocks out at most four adjacent edges with any particular colour. Alternatively, by a more careful argument which we give later, it can be shown that the same holds  as long as $c$ contains more than $k$ points.) These edges can be  matched up with $k/2$ spare edges that have both endpoints in $c'$, and then each of the coloured cycles is easily extended by uncolouring each matched pair of edges and appropriately colouring the edges joining their endpoints. Again for this case, the edges of any given colour form a cycle containing all the points in dense cells and in $c$.
 
This process can be repeated for each sparse cell. Since each dense cell has at most $\Delta$ neighbours in $\GC$, the total number of spare edges required of any one colour in any dense cell can be crudely bounded above  by $2k\Delta$, which is the same as the upper bound on the number of points already used up. Thus, for $M$ sufficiently large ($4k\Delta$ should do), there will be a sufficient number of spare edges to finish with a cycle of each colour through all points in $\GXrl$, using only the edges of $\GXrl$.
This finishes Step 2 and the proof in the case that  $\cB$ is empty.

We now turn our attention to the (much more difficult) case that $\cB$ is nonempty, for which we use an appropriate modification of the above argument. 

\medskip

\noindent
{\bf Part 2. The general case: $\cB$ can be nonempty.}
\smallskip

For this, we will need to use some edges of $\GXrk$ that are not present in $\GXrl$, but the definition of all structures (such as bad components) remains as determined by the graph $\GXrl$. Recall the Eulerian circuit $C$ chosen at the start of the proof. This circuit gives a directed tour of all dense cells in the graph $\GC$. We will first extend it to a circuit that includes routes through each bad component, and later perform modified versions of Steps 1 and 2 described above. 

Pick one such bad component $b$, which must be small by Lemma~\ref{l:nolarge}, and let $\cR=\cR(b)$  be a set of cells as in Lemma~\ref{l:ring}. Recall that $0<|\cR|<10/\delta^2$. To take care of $b$, we will work entirely in $\cR$ and the bad component $b$. Let $J$ denote the set of points in cells in $b$ and set $j=|J|$ (assume that $j>0$, since otherwise $b$ has no role in our argument). The subgraph of $\GXrl$ induced by $J$ is a copy of $K_j$, since $b$ is small and can be embedded in a $16\times16$ grid of cells (and assuming that $32\delta<1$).  Now consider the graph  $\GXrk$, which by definition is $k$-connected, and let $J'= N_{\GXrk}(J)\setminus J$. Let $H$ denote the induced bipartite subgraph of $\GXrk$ with parts $J$ and $J'$, and let $G\subseteq\GXrk$ be the union of $H$ with the clique on vertex set $J$. Note for later reference that the set $J'$ can possibly contain vertices in dense cells: although no cell in $b$ is adjacent to a dense cell, points in it can be adjacent to points in a dense cell.
\medskip

\noindent 
{\em Claim 1.\ }
$G$ contains $k/2$ pairwise edge-disjoint linear forests $F_1,\ldots, F_{k/2}$, such that in each forest

\noindent
(a) all vertices in $J$ have degree $2$, and

\noindent
(b) at most $2k$ vertices in $J'$ are contained in any path.

\smallskip

 To prove the claim,
we consider two cases, the second being much more difficult since it requires Lemma~\ref{l:colouring}. 
\medskip

\noindent
{\em Case 1:   $j>k$.}
\smallskip

Since $\GXrk$ is $k$-connected, no vertex cut of $G$ of size less than $k$ can separate $J$ from   $J'$. Moreover, both $J$ and $J'$ have cardinality at least $k$. So (a version of) Menger's theorem implies that there is a set of $k$ pairwise disjoint paths joining $J$ to $J'$. Hence, there is a matching, $T$, of cardinality $k$, with each edge of the matching joining a point in  $J$ to a point in $J'$.

Consider first an arbitrary complete graph $K_j$, of which  Lemma~\ref{l:decomp} can be used to obtain a full hamiltonian decomposition, together with a transversal containing one edge from each of the Hamilton cycles. 
Now choose $k/2$ of the Hamilton cycles in the decomposition, and let  $T'$ be the matching consisting of the edges of the transversal that lie in the chosen cycles.

Next, we can identify the set of vertices of   $K_j$ with the set $J$, such that the  vertices incident with edges in $T'$ are identified with  the vertices of $J$ that are  matched by $T$. From each of the Hamilton cycles, delete the edge, say $x$,  in that cycle that lies in $T'$, and add the two edges of $T$ adjacent to $x$. This gives a path $P$ in $G$ which starts and finishes in vertices in $J'$. Since $T'$ is a matching, the end vertices of all paths comprise a set of $k$ distinct vertices. Hence, these $k/2$ paths suffice for $F_1,\ldots, F_{k/2}$.
\medskip

 \noindent
{\em Case 2:   $j \le k$.}
\smallskip

Since $\GXrk$ is $k$-connected, each vertex in $J$ has degree at least $k$. The case $j=1$ is trivial, so we restrict our attention to $2\le j \le k$.  By Lemma~\ref{l:extradeg}, we may assume that one vertex in $J$, say $X$, has degree at least $k+1$. Hence, we have $d_H(X)\ge \ell+1$ where $\ell =k-j+1$, and $d_H(Y)\ge \ell$ for all other vertices $Y$ in $J$. Moreover, if any two vertices $X$ and $Y$ in $J$ had at most $\ell$ neighbours in $J'$ jointly, then those neighbours, together with the remaining $j-2$ members of $J$, would form a $(k-1)$-vertex cut, a contradiction. So the second condition in Lemma~\ref{l:colouring} is satisfied.
Thus, Lemma~\ref{l:colouring} ensures the existence of a colouring of (some of) the edges of $G$, such that each colour induces a linear forest in which each vertex in $J$ has degree $2$ and  whose ends are in $J'$. Since all edges in the paths of these forests are incident with $J$, at most $2j<2k$ vertices in $J'$ are used for each forest. So these forests can serve as the requisite linear forests $F_1,\ldots, F_{k/2}$. 
This completes the proof of   Claim~1.
\medskip

Let $J''$ be the set of vertices in $J'$ contained in some path of 
$F_1,\ldots, F_{k/2}$. By the claim above, we have $|J''|<k^2$. Moreover, by setting $r=r_l$ and $r'=r_k$ in Lemma~\ref{l:ring}, we deduce that each vertex in $J''$ is contained in a cell $c$ that is either dense or adjacent to some dense cell in $\cR$. (Note that $c$ may be either sparse or dense.) We extend each forest $F_i$ to a spanning forest $F'_i$ of $J\cup J''$ by adding those vertices in $J''$ not used by any path of $F_i$ as separate paths of length $0$.  The total number of paths is at most $|J''|k/2<k^3/2$, taking into account multiplicity since each $0$-length path may belong to more than one forest $F'_i$.

The next step is to associate each of the paths in $F'_i$ with the colour $i$, and create circuits $C_1,\ldots  C_{k/2}$ such that circuit $C_i$ contains $C$, together with an extra cycle $C(P)$ for each path $P$ in the forest $F'_i$.
To construct $C(P)$, take two cells $c$ and $d$ in $\cR$ (possibly $c=d$), one for each end-vertex of $P$, either containing the end-vertex  or adjacent to a cell containing it. Then the cycle $C(P)$ consists of a special new edge (possibly a loop) joining cells $c$ and $d$ (we say this edge {\em represents} $P$), together with a path of cells within $\cR $ joining those same cells $c$ and $d$. Note that each cycle has length bounded above by $10/\delta^2$ (see Lemma~\ref{l:ring}).

This construction was all with respect to a particular bad component $b$. Now repeat the construction for all the other bad components,  in each case extending the circuits $C_1,\ldots  C_{k/2}$ in the same way as for $b$. By Lemma~\ref{l:ring}, two paths $P$ and $P'$ related to different bad components have no vertex in common, and also the corresponding extra cycles $C(P)$ and $C(P')$ use disjoint sets of dense cells.
Hence, the number of these new cycles passing through any particular dense cell is at most $k^3/2$, so the maximum degree of dense cells in each $C_i$ is at most $2\Delta+k^3$. 

We next perform a version of Step~1 described in Part 1. First, let us call  all the  vertices lying in forests $F_i$ with respect to   bad components  the {\em forest} vertices.
\medskip

\noindent
{\em Step 1'. Turning  the circuits $C_i$ into  edge-disjoint coloured cycles}
\smallskip

The first part of this is done just as for Step~1 when $\cB$ empty, except for two aspects. To start with, all forest vertices within dense cells are set aside and not used in construction of the coloured cycles within the dense cells.  Secondly, where an edge of the circuit between cells $c$ and $c'$ is one of those representing a path $P$ in a forest, instead of a simple edge between vertices  $u$ in $c$ and $v$ in $c'$, the cycle uses the path  $P$ represented by that edge, together with the edges joining $P$ to $u$ and $v$. In this way, we obtain from $C_i$ a cycle of colour $i$ that  visits precisely all vertices that are either forest vertices (i.e.\  in $J(b)$ or $J''(b)$ for any small component $b$), or   in a dense cells, but {\em not} both.  All other points in $\bX$ will be called {\em outsiders}. They are not yet visited by any of  $C_1,\ldots,C_{k/2}$, either because they are neither forest vertices nor in a dense cell, or are in both.  

 We next perform a version of Step 2, as follows. 
\medskip

\noindent
{\em Step 2'. Extending the cycles to the outsiders}
\smallskip

This consists of extending each coloured cycle   as done in Step 2, but this time extending to them  only through the outsiders.  The other significant difference  between this and Step~2 is that the maximum degree of dense cells in each $C_i$ is now bounded above  by $2\Delta+k^3$ rather than $2\Delta$, and there are some outsiders in each dense cell, so there are fewer spare edges to work with, but the change is only a constant. So we     need to adjust the lower bound on $M$ accordingly. This completes the proof of part (i) of the theorem.
\medskip

\noindent
(ii) {\bf Proof for $k$  odd.}
\smallskip

We now consider odd $k\ge 1$. Recall that the total number of vertices in the geometric graph must be even. The same framework of argument is used as for $k$ even. For $k$ odd, colours $1,\ldots (k-1)/2$ will be used for the Hamilton cycles, and colour $(k+1)/2$ will be used for the matching. We find it convenient to refer to $(k+1)/2$ as the {\em match} colour. The edges that we colour using the match colour will form a spanning subgraph $D$ of the geometric graph, whose edges form a cycle on some (possibly all) vertices and a matching of some of the other vertices; in the very last stage of the argument we will adjust this to form a perfect matching of the whole graph.  

Define the multigraph $F$ as for $k$ even, and construct $C$ in the same way. 
We next need to perform a version of Step 1. In this case, instead of creating $k/2$ edge-disjoint cycles passing through all the points in dense cells, we construct $(k+1)/2$ of them using the same construction as for $k$ even. 

In the case that there are no bad cells, the argument as in Part~1 above shows that all the cycles can be extended as in Step~2 to edge-disjoint Hamilton cycles of the graph. Then, since the graph has an even number of vertices, every second edge of the matching colour can be omitted to provide the desired colouring.

So consider the case  that $\cB$ is possibly nonempty and follow the argument in Part~2   for $k$ even, up to the point where Claim~1  is made. This claim is replaced by the following.
\medskip

\noindent 
{\em Claim 2.\ }
$G$ contains $(k+1)/2$ pairwise edge-disjoint linear forests $F_1,\ldots, F_{(k+1)/2}$, such that in each of the first $(k-1)/2$  forests

\noindent
(a) all vertices in $J$ have degree $2$, and

\noindent
(b) at most $2k$ vertices in $J'$ are contained in any path,

\noindent
and the last forest, $F_{(k+1)/2}$, is a matching that saturates each vertex in $J$.

\smallskip

To prove this claim, we again distinguish two main cases but insert an extra one. 
\medskip

\noindent
{\em Case 1a:   $j>k+1$.}
\smallskip

As in the case $j>k$ for the first claim, we may find the matching $T$ of (odd) cardinality $k$, and also $(k+1)/2$ edge-disjoint Hamilton cycles in $G[J]$, with a matching $T'$ containing one edge from each Hamilton cycle. By relabelling $J$, we can then align the end vertices of the $T'$ with those of $T$ in $J$, but only using one end vertex of the edge in the last cycle. A set of edges in this last cycle is easily deleted so that what remains, together with the incident edge of $T$, is a matching either entirely contained in $J$ or with one edge (of $T$) leaving $J$. This proves the claim in this case.
\smallskip

\noindent
{\em Case 1b:   $j=k+1$.}
\smallskip

In this case, we may use the decomposition of a $K_{j+1}=K_{k+2}$   into $(k+1)/2$ Hamilton cycles, delete any one vertex, and use the broken Hamilton cycles for the paths, plus every second edge of any one of them for the   matching. 
\smallskip

\noindent
{\em Case 2:   $j\le k$.}
\smallskip

This is identical to the proof of Case 2 of the first claim, using the appropriate part of Lemma~\ref{l:colouring}.
This finishes the proof of  Claim~2.

We now extend the forests to spanning forests $F_i'$, exactly as for the even $k$ case (this now includes the match colour). We are now prepared for the analogue of Step~1.
\medskip

\noindent
{\em Step 1''. Circuits into cycles and matching}
\smallskip

For each non-match colour  $1\le i\le (k-1)/2$,  the forests $F_i'$ in the various bad components are   treated the same way as in Step~1' in order to create circuits $C_i$. These are then turned into edge-disjoint cycles $\widetilde C_i$ passing through all vertices except for the outsiders, just as for $k$ even. Simultaneously with this, by including an extra colour in the construction, we create a cycle in the match colour that passes through just the vertices in the dense cells apart from outsiders.  For this colour, we must do something different with respect to the vertices not in dense cells. So let $i=(k+1)/2$; so far we have a cycle $\widetilde C_i$, through all non-forest points in dense cells, whose edges are of colour $i$. The edges of a given forest $F_i$ related to a bad component are naturally coloured with $i$.  For each  single vertex component $v$ in the forest $F_{i}'$ (i.e.\ each vertex $v$ unmatched by $F_i$), let $d$ be a dense cell either equal or adjacent to the cell $c$ containing $v$. We may select the end-vertices $v_1$ and $v_2$ in $d$ of a spare edge of $\widetilde C_i$  (note this implies that the edges $vv_1$ and $vv_2$ are currently uncoloured).  Denoting these two vertices  $v_1$ and $v_2$, the pair $\{v_1,v_2\}$ is called the {\em gate} for $v$.  Each such vertex $v$ is treated in this way, its gate is defined and $v$ is   added to a set $W$ (a set of vertices which are  `waiting'). Naturally, any spare edge incident with either vertex in a gate is deemed non-spare for all subsequent choices of gates. Note that at this stage, all vertices in the graph are incident with an edge of the match colour except for the outsiders and those in $W$.
 
Next, we perform the step of extending the cycles to the outsiders.
\medskip

\noindent
{\em Step 2''. Extending cycles and matching to outsiders}
\smallskip

For each non-match colour $i$, the cycle of colour $i$ is extended as in Step~2'.  We next show that we can also include a near-perfect matching of the match colour, which saturates all but a bounded number of outsiders, such that the matching is edge-disjoint from all the coloured cycles. This extra matching is easily obtained using the methodology of Step~2': for the match colour $i$, if a  sparse cell $c$ contains at most $2k$ outsiders, we may leave all outsiders unmatched. If  it contains more outsiders, simply include an extra cycle $\widehat C$ through all outsiders in $c$. This cycle should be chosen simultaneously with all the other cycles being chosen within  cell $c$ in this Step~2'',  using Lemma~\ref{l:decomp}. Then, colour every second edge of $\widehat C$ with the match colour, leaving at most one outsider in this cell  unmatched. Finally, in either case, for each remaining unmatched outsider $v$, choose a gate $(v_1,v_2)$ exactly as described in Step~1''. Note that a dense cell contains a bounded number of outsiders since these are all in the forest $F_i$.

After all this, every vertex is in each of the coloured cycles of colour $i\le (k-1)/2$, but we still need to create the perfect matching of colour $i=(k+1)/2$. So far, all vertices are either matched by the match colour $i$, or lie in $W$, or lie on the cycle $\widetilde C_i$.  To fix this, in one fell swoop, we choose simultaneously for all vertices $v$ in $W$,  a vertex $v'$, in the gate for $v$,   such that all the vertices $v'$ have odd distance apart as measured along the cycle $\widetilde C_i$ of colour $i$. (Why this is possible will be explained shortly.) Then all such edges of the form $vv'$ are coloured $i$, and finally, every second edge along $C_i$ between these vertices is coloured $i$ in such a way as to create a matching. The edges of colour $i$ clearly form a perfect matching of the graph. 

The only thing left to explain is why the choice of all $v'$ as specified, creating odd distances, is feasible. Since there are two adjacent vertices on the cycle in each gate that can potentially be used as $v'$, we may pass along the cycle $\widetilde C_i$ making sure that the distances between chosen vertices are odd, until returning to the starting point. The very last distance must be odd because the number of gates equals the number of vertices in $W$. These are precisely the vertices outside the cycle that are not already matched by colour $i$. Since the number of vertices in the graph is even, the parity is correct for every distance to be odd.
\qed
\medskip

\noindent
{\bf Acknowledgment\ } We thank Alice and Millie for leaving the coloured pencils behind.
\end{document}